\def\omathop#1#2#3{\let\temp=#1\def\letter{#2}
  \ifcat#3_ \let\next\@@olim\else\let\next\@olim\fi\next#3}
\def\@olim{\letter\text{-}\!\temp}
\def\@@olim_#1{\mathchoice{
   \setbox0=\hbox{$\displaystyle\letter\text{-}\!\temp\!\text{-}\letter$}
   \setbox2=\hbox{$\displaystyle\temp$}
   \setbox4=\hbox{$\scriptstyle#1$}
   \dimen@=\wd4 \advance\dimen@ by -\wd2 \divide\dimen@ by2
   \def\next{\letter\text{-}\!\temp_{\hbox to 0pt{\hss$\scriptstyle#1$\hss}}
     \hskip\dimen@}
   \ifdim\wd2>\wd4 \def\next{\@olim_{#1}}\fi
   \ifdim\wd4>\wd0 \def\next{\mathop{\llap{$\letter$-}\!\temp}\limits_{#1}}\fi
   \next}
   {\@olim_{#1}}{\@olim_{#1}}{\@olim_{#1}}}
\def\bolim{\omathop{\lim}{bo}}
\newcommand{\reduce}{\mskip-2mu}
\newcommand{\ls}{\reduce\left\bracevert\reduce\vphantom{X}}
\newcommand{\rs}{\reduce\vphantom{X}\reduce\right\bracevert\reduce}
\theoremstyle{plain}
\newtheorem{thm}{Theorem}[section]
\newtheorem{cor}[thm]{Corollary}
\newtheorem{lemma}[thm]{Lemma}
\theoremstyle{definition}
\newtheorem{definition}[thm]{Definition}
\numberwithin{equation}{section}
\begin{document}

\title{Narrow operators on lattice-normed spaces}

\author{M.~Pliev}

\address{South Mathematical Institute of the Russian Academy of Sciences\\
str. Markusa 22,
Vladikavkaz, 362027 Russia}



\keywords{Narrow operators, GAM-compact operators, dominated operators, lattice-normed spaces, Banach lattices}

\subjclass[2000]{Primary 46B99; Secondary 47B99.}

\begin{abstract}
The aim of this article is to extend results of Maslyuchenko~O., Mykhaylyuk~V. Popov~M. about narrow operators on vector lattices. We give a new definition of a narrow operator where a vector lattice as the domain space of a narrow operator is replaced with a lattice-normed space. We prove that every $GAM$-compact $\text{(bo)}$-norm continuous linear operator from a Banach-Kantorovich space $V$ to a Banach lattice $Y$ is narrow. Then  we  show that, under some mild conditions, a continuous dominated operator is narrow if and only if its  exact dominant is.
\end{abstract}

\maketitle


\section{Introduction}

\subsection{}
Today the theory of narrow operators is a very active area of Functional Analysis {\cite{B,Bi-1,Bi-2,F,Kad-2,Kad-3,Ma}}. Plichko and Popov were first $[20]$ who systematically studied this class of operators. It is worth remarking, however, that narrow operators have been studied in some particular cases by some others authors before this notion appeared. For example, Ghoussoub and Rosental $[8]$ have considered {``}norm-signed preserving operators{''} on $L_{1}[0,1]$, which are precisely the operators on $L_{1}[0,1]$ which are not narrow. On the other hand, Enflo and Starbird $[6]$ proved that if $T:L_{1}(\mu)\rightarrow L_{1}(\nu)$  is $L_{1}$-complementary singular (i.e. $T$ is invertible on no complemented subspace of $L_{1}(\mu)$isomorphic to $L_{1}(\mu)$) then $T$ is narrow. Johnson, Maurey, Schechtman and Tzafriri $[9]$ proved that every operator $T:L_{p}(\mu)\rightarrow L_{p}(\nu),1 < p<2$ which is $L_{p}$-complementary  singular is narrow. Later Kadets, Shvidkov and Werner had considered narrow operators in a different context $[13]$. Flores and Ruiz considered narrow operators from a K{\"{o}}the function space $E$ $[7]$. Finally, Maslyuchenko, Mykhaylyuk and Popov have considered a general vector-lattice approach to narrow operators $[18]$.

\subsection{}

In this seminal paper [18] the authors gave a new definition of a narrow operator.

\begin{definition} \label{def:0.2}
Let $E$ be an atomless order complete vector lattice, $X$ a Banach space.  A map $f:E\rightarrow X$ is called {\it narrow} if for every $x\in E_{+}$ and every $\varepsilon>0$ there exist some $y\in E$ such that $|y|=x$ and $\|f(y)\|<\varepsilon$. We  say that $f$ is {\it strictly narrow} if for every $x\in E_{+}$ there exists some $y\in E$ such that $|y|=x$ and $f(y)=0$.
\end{definition}

In  the same paper $[18]$ another definition of a narrow operator for the case when the range space is a vector lattice, was given.
Let $E,F$ be vector lattices with $E$ atomless. A linear operator  $T:E\rightarrow F$  is called {\it order narrow} if for every $x\in E_{+}$ there exists a net $(x_{\alpha})$ in $E$ such that $|x_{\alpha}|=x$ for each $\alpha$ and $Tx_{\alpha}\overset{(o)}\rightarrow 0$.

\subsection{}
In this paper we consider narrow operators in the framework of lattice-normed spaces. The notion of a lattice-normed space was introduced by Kantorovich in the first part of 20th century $[10]$. Later, Kusraev and his school had provided a deep theory. A detailed account the reader can find in $[15]$.

\section{Preliminaries}

The  goal of this section is to introduce some basic definitions and facts. General information on vector lattices,
Banach spaces and lattice-normed spaces the reader can find in the books $[1,2,15,16,17,19]$.

{\bf 1.\,1.}~Consider a vector space $V$ and a real  archimedean vector lattice
$E$. A map $\ls \cdot\rs:V\rightarrow E$ is a \textit{vector norm} if it satisfies the following axioms:
\begin{enumerate}
  \item[1)] $\ls v \rs\geq 0;$\,\, $\ls v\rs=0\Leftrightarrow v=0$;\,\,$(\forall v\in V)$.
  \item[2)] $\ls v_1+v_2 \rs\leq \ls v_1\rs+\ls v_2 \rs;\,\, ( v_1,v_2\in V)$.
  \item[3)] $\ls\lambda  v\rs=|\lambda|\ls v\rs;\,\, (\lambda\in\Bbb{R},\,v\in V)$.
\end{enumerate}
A vector norm is called \textit{decomposable} if
\begin{enumerate}
  \item[4)] for all $e_{1},e_{2}\in E_{+}$ and $x\in V$ from $\ls x\rs=e_{1}+e_{2}$ it follows that there exist $x_{1},x_{2}\in V$ such that $x=x_{1}+x_{2}$ and $\ls x_{k}\rs=e_{k}$, $(k:=1,2)$.
\end{enumerate}

A triple $(V,\ls\cdot\rs,E)$ (in brief $(V,E),(V,\ls\cdot\rs)$ or $V$ with default parameters omitted) is a \textit{lattice-normed space} if $\ls\cdot\rs$ is a $E$-valued vector norm in the vector space $V$. If the norm $\ls\cdot\rs$ is decomposable then the space $V$ itself is called decomposable. We say that a net $(v_{\alpha})_{\alpha\in\Delta}$ {\it $(bo)$-converges} to an element $v\in V$ and write $v=\bolim v_{\alpha}$ if there exists a decreasing net $(e_{\gamma})_{\gamma\in\Gamma}$ in $E$ such that $\inf_{\gamma\in\Gamma}(e_{\gamma})=0$ and for every $\gamma\in\Gamma$ there is an index $\alpha(\gamma)\in\Delta$ such that $\ls v-v_{\alpha(\gamma)}\rs\leq e_{\gamma}$ for all $\alpha\geq\alpha(\gamma)$. A net $(v_{\alpha})_{\alpha\in\Delta}$ is called \textit{$(bo)$-fundamental} if the net $(v_{\alpha}-v_{\beta})_{(\alpha,\beta)\in\Delta\times\Delta}$ $(bo)$-converges to zero. A lattice-normed space is called {\it $(bo)$-complete} if every $(bo)$-fundamental net $(bo)$-converges to an element of this space. Let $e$ be a positive element of a vector lattice $E$.
By $[0,e]$ we denote the set $\{v\in V:\,\ls v\rs\leq e\}$.
A set $M\subset V$ is called  $\text{(bo)}$-{\it bounded } if there exists
$e\in E_{+}$ such that  $M\subset[0,e]$. Every decomposable $(bo)$-complete lattice-normed space is called a {\it Banach-Kantorovich space} (a BKS for short).

\subsection{}
Let $(V,E)$ be a lattice-normed space.  A subspace $V_{0}$ of $V$ is called a $\text{(bo)}$-ideal of $V$ if for $v\in V$ and $u\in V_{0}$, from $\ls v\rs\leq\ls u\rs$ it follows that $v\in V_{0}$. A subspace $V_{0}$ of a decomposable lattice-normed space $V$  is a $\text{(bo)}$-ideal if and only if $V_{0}=\{v\in V:\,\ls v\rs\in L\}$, where $L$ is an order ideal in $E$ [15,\,2.1.6.1]. Let $V$ be a lattice-normed space and $y,x\in V$. If $\ls x\rs\bot\ls y\rs=0$ then we call the elements $x,y$ {\it disjoint} and write $x\bot y$. The equality $x=\coprod_{i=1}^{n}x_{i}$ means that $x=\sum_{i=1}^{n}x_{i}$ and $x_{i}\bot x_{j}$ if $i\neq j$. An element $z\in V$ is called a {\it component} or a \textit{fragment} of $x\in V$ if $0\leq \ls z\rs\leq\ls x\rs$ and $x\bot(x-z)$. Two fragments $x_{1},x_{2}$ of $x$  are called \textit{mutually complemented} or $MC$, in short, if $x=x_1+x_{2}$. The notations $z\sqsubseteq x$ means that $z$ is a fragment of $x$. According to [1,\,p.86] an element $e>0$ of a vector lattice $E$ is called an {\it atom}, whenever $0\leq f_{1}\leq e$, $0\leq f_{2}\leq e$ and $f_{1}\bot f_{2}$ imply that either $f_{1}=0$ or $f_{2}=0$. A vector lattice $E$ is  atomless if there is no atom $e\in E$.

The following object will be often used in different constructions below. Let $V$ be a lattice-normed space and $x\in V$. A sequence $(x_{n})_{n=1}^{\infty}$ is called a {\it disjoint tree} on $x$ if $x_{1}=x$ and $x_{n}=x_{2n}\coprod x_{2n+1}$ for each $n\in\Bbb{N}$. It is clear that all $x_{n}$ are fragments of $x$. All lattice-normed spaces below we consider to be decomposable.

\subsection{}

Consider some important examples of lattice-normed spaces. We begin with simple extreme cases, namely vector lattices and normed spaces. If $V=E$ then the modules of an element can be taken as its lattice norm: $\ls v\rs:=|v|=v\vee(-v);\,v\in E$. Decomposability of this norm easily follows from the Riesz Decomposition Property holding in every vector lattice. If $E=\Bbb{R}$ then $V$ is a normed space.

Let $Q$ be a compact  and let $X$ be a Banach space. Let $V:=C(Q,X)$ be the space of  continuous vector-valued functions from $Q$ to $X$. Assign $E:=C(Q,\Bbb{R})$. Given $f\in V$, we define its lattice norm by the relation $\ls f\rs:t\mapsto\|f(t)\|_{X}\,(t\in Q)$. Then $\ls\cdot\rs$ is a decomposable norm [15,\,lemma 2.3.2].

Let $(\Omega,\Sigma,\mu)$ be a $\sigma$-finite measure space,
let $E$ be an order-dense ideal in  $L_{0}(\Omega)$ and let $X$ be a Banach space.
By $L_{0}(\Omega,X)$ we denote the space of (equivalence classes of)  Bochner $\mu$-measurable vector functions
acting from $\Omega$ to $X$. As usual, vector-functions are equivalent if they have
equal values at almost all points of the set $\Omega$. If $\widetilde{f}$ is the coset of a measurable vector-function $f:\Omega\rightarrow X$ then $t\mapsto\|f(t)\|$,$(t\in\Omega)$ is a scalar measurable function whose coset is denoted by the symbol $\ls\widetilde{f}\rs\in L_{0}(\mu)$. Assign by definition
$$
E(X):=\{f\in L_{0}(\mu,X):\,\ls f\rs\in E\}.
$$
Then $(E(X),E)$ is a lattice-normed space with a decomposable norm [15,\,lemma 2.3.7.].
If $E$ is a Banach lattice then the lattice-normed space $E(X)$ is  a Banach space with respect to the norm $|\|f|\|:=\|\|f(\cdot)\|_{X}\|_{E}$.

\subsection{}

Let $E$ be a Banach lattice and let $(V,E)$ be a lattice-normed space. By definition,
$\ls x\rs\in E_{+}$ for every $x\in V$, and we can introduce some \textit{mixed norm} in $V$ by the formula
$$
\||x|\|:=\|\ls x\rs\|\,\,\,(\forall\, x\in V).
$$
The normed space $(V,\||\cdot|\|)$ is called a \textit{space with a mixed norm}.
In view of the inequality $|\ls x\rs-\ls y\rs|\leq\ls x-y\rs$ and monotonicity of the norm in $E$, we have
$$
\|\ls x\rs-\ls y\rs\|\leq\||x-y|\|\,\,\,(\forall\, x,y\in V),
$$
so a vector norm is a norm continuous operator from $(V,\||\cdot|\|)$ to $E$. A lattice-normed space $(V,E)$ is called
a \textit{Banach space with a mixed norm} if the normed space $(V,\||\cdot|\|)$ is complete with respect to the norm convergence.

\subsection{}

Consider lattice-normed spaces $(V,E)$ and $(W,F)$, a linear operator $T:V\rightarrow W$ and a positive operator
$S\in L_{+}(E,\,F)$. If the condition
$$
\ls Tv\rs\leq
S\ls v\rs;\,(\forall\, v\in V)
$$
is satisfied then we say that $S$ \textit{dominates} or
\textit{majorizes} $T$ or that $S$ is \textit{dominant}
or {majorant} for $T$.
In this case $T$ is called a \textit{dominated} or
\textit{majorizable} operator.  The set of all dominants of the operator $T$
is denoted by $\text{maj}(T)$. If there is the least element in
$\text{maj}(T)$ with respect to the order induced by $L_{+}(E,F)$ then
it is called the {\it least} or the {\it exact dominant} of $T$ and it is denoted by
$\ls T\rs$. The set of all dominated operators from $V$ to $W$ is denoted by $M(V,W)$.
Denote by $E_{0+}$ the conic hull of the set $\ls V\rs=\{\ls v\rs:\,v\in V\}$, i.e., the set of elements of the form $\sum_{k=1}^{n}\ls v_{k}\rs$, where $v_{1},\dots,v_{n}\in V$, $n\in\Bbb{N}$.

\begin{lemma}[\cite{Ku}, 4.1.2,\,4.1.5.] \label{le:1}
Let $(V,E),(W,F)$ be lattice-normed spaces. Suppose $V$ is decomposable and $F$ is order complete. Then every
dominated operator has the exact dominant $\ls T\rs$.
The exact dominant of an arbitrary operator $T\in M(V,W)$ can be calculated by the following formulas:
$$
\ls T\rs(e)
=\sup\left\{\sum\limits_{i=1}^n\ls Tv_i\rs:
\sum\limits_{i=1}^n\ls v_i\rs= e, \,e\in E_{0+}\right\};
$$
$$
\ls T\rs(e)=\sup\{\ls T\rs(e_{0}):\,e_{0}\in E_{0+};\, e_{0}\leq e\}
(e\in E_{+});
$$
$$
\ls T\rs(e)=\ls T\rs(e_+)+\ls T\rs(e_-),\,
(e\in E).
$$
\end{lemma}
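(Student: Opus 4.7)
The plan is to reproduce, in the lattice-normed setting, the Riesz--Kantorovich construction of the modulus of an order bounded operator. I will build a candidate operator $S:E\to F$ in three stages (corresponding to the three displayed formulas) and then verify that $S$ is positive and linear, dominates $T$, and is minimal among dominants. Order completeness of $F$ guarantees that all suprema appearing in the construction exist in $F$, while decomposability of the vector norm on $V$ is what makes the construction additive.

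The core work is the first stage. For $e\in E_{0+}$, the definition of the conic hull gives a representation $e=\sum_{k=1}^{n}\ls v_k\rs$, so the set
\[
A(e):=\Bigl\{\sum\nolimits_{i=1}^{m}\ls Tu_i\rs\;:\;m\in\Bbb{N},\ u_i\in V,\ \sum\nolimits_{i=1}^{m}\ls u_i\rs=e\Bigr\}
\]
is non-empty; moreover, for any fixed $S_0\in\mathrm{maj}(T)$ every element of $A(e)$ is bounded above by $S_0(e)$, so order completeness of $F$ yields $S(e):=\sup A(e)\in F_+$. Positive homogeneity is immediate. Additivity $S(e_1+e_2)=S(e_1)+S(e_2)$ splits into two inequalities; the $\geq$ direction comes from concatenating decompositions of $e_1$ and $e_2$. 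The $\leq$ direction is the delicate one: given $\sum\ls u_i\rs=e_1+e_2$ I apply the Riesz decomposition property in $E$ to obtain $\ls u_i\rs=a_i+b_i$ with $\sum a_i=e_1$ and $\sum b_i=e_2$, then invoke axiom~(4) to split each $u_i=u_i^{1}+u_i^{2}$ with $\ls u_i^{1}\rs=a_i$ and $\ls u_i^{2}\rs=b_i$; whence $\sum\ls Tu_i\rs\leq\sum\ls Tu_i^{1}\rs+\sum\ls Tu_i^{2}\rs\leq S(e_1)+S(e_2)$.

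Once $S$ is additive and positively homogeneous on $E_{0+}$, the second stage extends it to $E_+$ via the second displayed formula: the supremum exists because the family is still bounded above by $S_0(e)$, and additivity on $E_+$ follows from additivity on $E_{0+}$ together with upward-directedness of the defining sets and, once again, decomposability in $V$. The third stage extends $S$ to arbitrary $e\in E$ by linearity, using $e=e_{+}-e_{-}$. Standard Kantorovich--Riesz bookkeeping (if $e_{+}-e_{-}=f_{+}-f_{-}$ then $e_{+}+f_{-}=f_{+}+e_{-}$, so invoke additivity on $E_+$) shows this extension is well-defined and linear. Domination of $T$ is then immediate from the trivial one-term decomposition $u_1=v$, which gives $\ls Tv\rs\leq S(\ls v\rs)$; minimality is equally direct, since for any dominant $S'\in\mathrm{maj}(T)$ and any admissible decomposition one has $\sum\ls Tu_i\rs\leq\sum S'\ls u_i\rs=S'(e)$, hence $S\leq S'$ on $E_{0+}$, and this propagates to $E_+$ and then to $E$.

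The principal obstacle is the additivity argument on $E_{0+}$: this is precisely the step where decomposability of the vector norm on $V$ enters in an essential way (without it one cannot refine a global decomposition $\sum\ls u_i\rs=e_1+e_2$ into separate decompositions of $e_1$ and $e_2$). Order completeness of $F$ is indispensable throughout for the existence of the suprema. Everything else is routine Kantorovich-type extension and bookkeeping, together with the trivial verifications of majorization and minimality.
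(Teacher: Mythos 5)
The paper offers no proof of this lemma---it is quoted directly from Kusraev's \emph{Dominated Operators} (4.1.2, 4.1.5)---and your sketch reproduces the standard Riesz--Kantorovich construction used there, with the two essential ingredients (order completeness of $F$ for the existence of the suprema, decomposability of $V$ to refine a decomposition of $e_1+e_2$ into separate decompositions of $e_1$ and $e_2$, which also yields the hereditary property of $E_{0+}$ needed at the second stage) correctly identified; the argument is sound. The only discrepancy is in the third formula: the linear extension gives $\ls T\rs(e)=\ls T\rs(e_+)-\ls T\rs(e_-)$, as you in effect use, so the plus sign in the paper's statement is evidently a misprint.
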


{\bf Acknowledgment.}~I am very grateful to professor Mikhail Popov  for his valuable remarks and great help. I am also grateful to the referees for their useful  suggestions.

\section{Definition and some properties of narrow operators}
\label{sec2}

In this section we introduce a new class of operators in lattice-normed spaces and describe some of their properties.

\begin{definition} \label{def:nar1}
Let $(V,E)$ be a lattice-normed space,  $X$ a Banach space and suppose that $E$ is atomless. An operator  $T:V\rightarrow X$ is called \textit{narrow}, if for every $u\in V,\,\varepsilon>0$ there exist two $MC$ fragments $u_{1},u_{2}$ of $u$ such that $\|T(u_{1}-u_{2})\|<\varepsilon$. If for every $u\in V$ there exist two $MC$ fragments $u_{1},u_{2}$ of  the $u$ such that   $T(u_{1}-u_{2})=0$ for then the operator $T$ is called  \textit{strictly narrow}.
\end{definition}

The set of all narrow operators from a lattice-normed space $(V,E)$ to a Banach space $X$ we denote by $\mathcal{N}(V,X)$.

\begin{lemma} \label{le:2}
If a lattice-normed space $(V,E)$ coincides with $(E,E)$ then definitions \ref{def:0.2} and \ref{def:nar1} are equivalent.
\end{lemma}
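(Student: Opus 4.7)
The plan is to use the classical bijection in a vector lattice between elements $y$ with $|y|=|u|$ and pairs $(u_1,u_2)$ of MC fragments of $u$, given by $u_1 = \tfrac12(u+y)$ and $u_2 = \tfrac12(u-y)$ (so $u_1+u_2 = u$ and $u_1 - u_2 = y$). In the case $V = E$, $\ls\cdot\rs$ is $|\cdot|$ and the disjointness $\bot$ is the usual $|\cdot|\wedge|\cdot|=0$, so both definitions live in the same language.

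For Definition~\ref{def:0.2} $\Rightarrow$ Definition~\ref{def:nar1}, fix $u\in E$ and $\varepsilon>0$, apply Definition~\ref{def:0.2} to $|u|\in E_+$ to obtain $y\in E$ with $|y|=|u|$ and $\|Ty\|<\varepsilon$, then take $u_1 = \tfrac12(u+y)$ and $u_2 = \tfrac12(u-y)$. The Riesz-type identity $|u+y|\wedge|u-y| = \bigl\lvert|u|-|y|\bigr\rvert = 0$ yields $|u_1|\wedge|u_2|=0$; disjointness then gives $|u_1|+|u_2| = |u_1+u_2| = |u|$, so each $|u_i|\leq|u|$ and $u - u_i = u_{3-i}$ is disjoint from $u_i$, confirming that $u_1,u_2$ are MC fragments of $u$ with $\|T(u_1-u_2)\| = \|Ty\|<\varepsilon$. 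Conversely, for Definition~\ref{def:nar1} $\Rightarrow$ Definition~\ref{def:0.2}, fix $x\in E_+$ and $\varepsilon>0$, apply Definition~\ref{def:nar1} to $u = x$ to get MC fragments $u_1, u_2$ with $u_1+u_2 = x$, $|u_1|\wedge|u_2|=0$, and $\|T(u_1-u_2)\|<\varepsilon$; then $y := u_1 - u_2$ satisfies $|y| = |u_1|+|u_2| = |u_1+u_2| = x$ by disjointness, and $\|Ty\|<\varepsilon$.

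The only nontrivial ingredient is the lattice identity $|u+y|\wedge|u-y| = \bigl\lvert|u|-|y|\bigr\rvert$ (a standard consequence of the Riesz identities), which is what makes the midpoint construction yield disjoint elements exactly when $|u|=|y|$; once this is invoked, both directions are pure algebra, and neither order completeness nor atomlessness of $E$ is needed for the equivalence itself.
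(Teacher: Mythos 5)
Your proof is correct; the nontrivial direction is handled by a genuinely different (and slightly slicker) route than the paper's. For Definition~\ref{def:0.2} $\Rightarrow$ Definition~\ref{def:nar1}, the paper splits $x=x_{+}-x_{-}$, applies Definition~\ref{def:0.2} twice (to $x_{+}$ and to $x_{-}$, each with tolerance $\varepsilon/2$) to get $x_{1}',x_{2}'$, takes their positive and negative parts as $MC$ fragments of $x_{+}$ and $x_{-}$, and recombines them into $MC$ fragments $x_{1}=e_{1}-f_{2}$, $x_{2}=e_{2}-f_{1}$ of $x$, finishing with the triangle inequality. You instead apply Definition~\ref{def:0.2} once to $|u|$ and pass through the midpoint map $y\mapsto\bigl(\tfrac12(u+y),\tfrac12(u-y)\bigr)$, using the Riesz identity $|u+y|\wedge|u-y|=\bigl|\,|u|-|y|\,\bigr|$ (which indeed follows from $|a|\wedge|b|=\tfrac12\bigl|\,|a+b|-|a-b|\,\bigr|$ with $a=u+y$, $b=u-y$) to get disjointness, and then $|u_{1}|+|u_{2}|=|u_{1}+u_{2}|=|u|$ to confirm the fragment property. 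Your version buys a single application of the hypothesis with the full $\varepsilon$ and makes explicit the bijection between ``signs'' $y$ with $|y|=|u|$ and pairs of $MC$ fragments of $u$, at the cost of invoking that lattice identity; the paper's version uses only elementary facts about positive and negative parts but needs two applications and an $\varepsilon/2$ bookkeeping step. The converse direction ($y:=u_{1}-u_{2}$, with $|y|=|u_{1}|+|u_{2}|=x$ by disjointness) is essentially the same in both arguments, and your closing remark that neither atomlessness nor order completeness of $E$ is used in the equivalence itself is accurate.
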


\begin{proof}
Let $X$ be a Banach space and let $T:E\rightarrow X$ be a narrow operator in accordance with Definition \ref{def:nar1}. Consider an element  $e\in E_{+}$ and $\varepsilon>0$. Then there exist two $MC$ fragments $e_{1},\,e_{2}$ of $e$ such that $\|T(e_{1}-e_{2})\|<\varepsilon$. Then for $y=e_{1}-e_{2}$ one has that $\|Ty\|<\varepsilon$, that is, Definition \ref{def:0.2} for $T$ is satisfied.

Now we prove the inverse assertion. Let $T$ be a narrow operator in accordance with Definition \ref{def:0.2}. Fix any $x\in E$ and $\varepsilon>0$. Then $x=x_{+}-x_{-}$ and there exist two elements $x_{1}'$ and $x_{2}'$ such that $|x_{1}'|=x_{+},\,\|Tx_{1}'\|<\frac{\varepsilon}{2}$ and $|x_{2}'|=x_{-},\,\|Tx_{2}'\|<\frac{\varepsilon}{2}$. We consider new elements: $e_{1}:=x_{1}'\vee 0$,\, $e_{2}:=-x_{1}'\vee 0$ and $f_{1}:=x_{2}'\vee 0$,\, $f_{2}:=-x_{2}'\vee 0$. So, we have
two pairs of $MC$ fragments   $e_{1},e_{2}$ of $x_{+}$ and  $f_{1},f_{2}$ of $x_{-}$ such that the following inequalities  hold
$$
\|T(e_{1}-e_{2})\|<\frac{\varepsilon}{2};\,
\|T(f_{1}-f_{2})\|<\frac{\varepsilon}{2}.
$$
Then $x_{1}:=e_{1}-f_{2}$ and $x_{2}:=e_{2}-f_{1}$ are $MC$ fragments of  $x$, and
$$
\|T(x_{1}-x_{2})\|=\|T(e_{1}-f_{2}-e_{2}+f_{1})\|<
\|T(e_{1}-e_{2})\|+\|T(f_{1}-f_{2})\|<\varepsilon.
$$
\end{proof}

Let $(V,E)$ be a lattice-normed space and let $(W,F)$ be a Banach space with a mixed norm. An operator $T:V\rightarrow W$ is called  \textit{order narrow} if for every $u\in V$ there exists a net  $(v_{\alpha})_{\alpha\in\Lambda}$ where every element $v_{\alpha}$ is a difference  $u_{\alpha}^{1}-u_{\alpha}^{2}$ of two $MC$ fragments of $u$ such that $T(v_{\alpha})\overset{(bo)}\rightarrow 0$.

\begin{lemma} \label{le:3}
Let $(V,E)$ be a lattice-normed space and let $(W,F)$ be a Banach space with a mixed norm. Then every narrow operator $T:V\rightarrow W$ is order narrow.
\end{lemma}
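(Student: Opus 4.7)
The plan is to construct a sequence $(v_n)_{n \in \mathbb{N}}$, where each $v_n = u_1^n - u_2^n$ is the difference of two $MC$ fragments of a given $u \in V$, such that $T(v_n) \xrightarrow{(bo)} 0$ in $W$. Since a sequence is a special case of a net, this will establish order narrowness of $T$.

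First I would use narrowness directly: for each $n \in \mathbb{N}$, apply Definition \ref{def:nar1} with $\varepsilon_n := 2^{-n}$ to find $MC$ fragments $u_1^n, u_2^n$ of $u$ with $\|T(u_1^n - u_2^n)\| < 2^{-n}$, where $\|\cdot\|$ is the mixed norm of $W$. Setting $v_n := u_1^n - u_2^n$ and unfolding the definition of the mixed norm gives $\|\ls T(v_n) \rs\|_F < 2^{-n}$. The next step is to pass from this geometric norm-decay in $F$ to a genuine order-decay. Since $F$ is a Banach lattice (this is built into the notion of a Banach space with mixed norm, see 1.4), the series $\sum_{n \geq 1} \ls T(v_n) \rs$ is absolutely convergent in $F$; let $s \in F$ denote its sum and $s_k := \sum_{n=1}^{k} \ls T(v_n) \rs$ the partial sums. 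The sequence $(s_k)$ is increasing and norm-converges to $s$, and because the positive cone of a Banach lattice is norm-closed, one checks that $s_k \uparrow s$ in the order sense: for any upper bound $s'$ of $(s_k)$, the inequalities $s' - s_k \geq 0$ together with $s'-s_k \to s'-s$ in norm force $s' - s \geq 0$.

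Define now $S_k := s - s_{k-1}$ (with $s_0 := 0$). Then $(S_k)$ is a decreasing sequence of positive elements of $F$ with $\ls T(v_n) \rs \leq S_n$. It remains to verify that $\inf_k S_k = 0$. If $e \leq S_k$ for every $k$, then $s_{k-1} \leq s - e$ for every $k$, so, taking the supremum over $k$, $s \leq s - e$, i.e.\ $e \leq 0$; combined with $S_k \geq 0$, this forces $\inf_k S_k = 0$. Now the $(bo)$-convergence $T(v_n) \xrightarrow{(bo)} 0$ is immediate: given $k$, for every $n \geq k$ one has $\ls T(v_n) \rs \leq S_n \leq S_k$, so $(S_k)_{k \in \mathbb{N}}$ is exactly the decreasing net with infimum $0$ required by the definition of $(bo)$-convergence.

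The only delicate point, and the step I would expect to require the most care, is the passage from the scalar estimate $\|\ls T(v_n) \rs\|_F < 2^{-n}$ to an order-decreasing dominating sequence; this is precisely where the Banach-lattice structure on $F$ and the norm closedness of its positive cone are used. Everything else — the choice of the fragments, and reading off $(bo)$-convergence from the bound $\ls T(v_n) \rs \leq S_n$ — is a direct unwinding of Definition \ref{def:nar1} and of the definition of $(bo)$-convergence given in Section 2.
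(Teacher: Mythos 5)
Your proof is correct and follows essentially the same route as the paper: choose $MC$ fragments with $\||Tv_n|\|<2^{-n}$ and dominate $\ls Tv_n\rs$ by the tails of the absolutely convergent series $\sum_k\ls Tv_k\rs$ (your $S_k$ is exactly the paper's $e_k=\sum_{k=n}^{\infty}\ls Tv_k\rs$). The only difference is that you spell out, via norm-closedness of the positive cone, why these tails decrease to $0$ in order, a step the paper simply asserts.
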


\begin{proof}
We consider an element $u\in V$. Let $\varepsilon_{n}:=\frac{1}{2^{n}}$  and let $u_{n}^{1},u_{n}^{2}$ be  $MC$ fragments~of the element $u$ and $v_{n}=(u_{n}^{1}-u_{n}^{2})$, $|\|Tv_{n}|\|\leq\varepsilon_{n}$.  We set  $e_{n}=\sum\limits_{k=n}^{\infty}\ls Tv_{k}\rs$, $e_{n}\in F_{+}$, $e_{n}\downarrow 0$ and the following estimate holds $\ls Tv_{n}\rs\leq e_{n}$. Thus, $Tv_{n}\overset{(bo)}\rightarrow 0$.
\end{proof}

The sets of narrow and order narrow operators coincide if a vector lattice $F$ is good enough.

\begin{lemma}\label{le:4}
Let  $(V,E)$ and $(W,F)$ be the same as in $3.3$ and let $F$ be a Banach lattice with order continuous norm. Then a linear operator  $T:V\rightarrow W$ is order narrow if and only if $T$ is narrow.
\end{lemma}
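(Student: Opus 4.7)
The plan is to observe that the forward implication (narrow $\Rightarrow$ order narrow) is already established by Lemma \ref{le:3}, which did not use any special hypothesis on $F$. So the content of Lemma \ref{le:4} is the converse: under order continuity of the norm on $F$, order narrowness implies narrowness.

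I would proceed as follows. Fix $u \in V$ and $\varepsilon>0$; the goal is to produce $MC$ fragments $u_1,u_2$ of $u$ with $|\|T(u_1-u_2)|\|<\varepsilon$. Using the assumption that $T$ is order narrow, choose a net $(v_\alpha)_{\alpha\in\Lambda}$ with each $v_\alpha = u_\alpha^1 - u_\alpha^2$ a difference of $MC$ fragments of $u$ and with $T v_\alpha \overset{(bo)}{\to} 0$ in $W$. Unfolding the definition of $(bo)$-convergence given in Section 2, there exists a net $(e_\gamma)_{\gamma\in\Gamma}$ in $F$ with $e_\gamma \downarrow 0$ and, for each $\gamma$, an index $\alpha(\gamma)\in\Lambda$ such that $\ls T v_\alpha \rs \leq e_\gamma$ for all $\alpha \geq \alpha(\gamma)$.

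Now I bring in the hypothesis that $F$ has order continuous norm. This is exactly the condition that $e_\gamma \downarrow 0$ in $F$ implies $\|e_\gamma\|_F \to 0$. Consequently, there exists $\gamma_0 \in \Gamma$ with $\|e_{\gamma_0}\|_F < \varepsilon$. For any $\alpha \geq \alpha(\gamma_0)$, monotonicity of the norm in the Banach lattice $F$ and the definition of the mixed norm on $W$ yield
$$|\|T v_\alpha|\| \;=\; \|\ls T v_\alpha\rs\|_F \;\leq\; \|e_{\gamma_0}\|_F \;<\; \varepsilon.$$
Setting $u_1 := u_\alpha^1$ and $u_2 := u_\alpha^2$ for such an $\alpha$ gives $MC$ fragments of $u$ with $|\|T(u_1-u_2)|\| < \varepsilon$, confirming that $T$ satisfies Definition \ref{def:nar1}.

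There is no real obstacle here; the statement is essentially a translation lemma. The only point that has to be handled carefully is the passage from the purely order-theoretic $(bo)$-convergence in $W$ to norm convergence in the mixed norm, and this is exactly what the order continuity of the norm on $F$ is designed to supply, via the standard fact that an order continuous lattice norm sends order-null decreasing nets to norm-null nets.
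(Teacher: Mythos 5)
Your proposal is correct and follows the same route as the paper: the forward direction is delegated to Lemma \ref{le:3}, and the converse is obtained by unfolding the definition of $(bo)$-convergence of $Tv_\alpha$ and using order continuity of the norm on $F$ to pass from the dominating decreasing net $e_\gamma\downarrow 0$ to $\|\ls Tv_\alpha\rs\|<\varepsilon$ for large $\alpha$. The paper compresses this into a single sentence; your version merely spells out the intermediate net $(e_\gamma)$ explicitly.
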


\begin{proof}
Let $T$ be an order narrow operator. Then for every  $u\in V$ there exist a net
$(v_{\alpha})_{\alpha\in\Lambda};\,v_{\alpha}=u_{\alpha}^{1}-u_{\alpha}^{2}$,
where $u_{\alpha}^{1}$ and $u_{\alpha}^{2}$ are $MC$ fragments of $u$ and $Tv_{\alpha}\overset{(bo)}\rightarrow 0$. Fix any $\varepsilon>0$. Using the fact that the norm in $F$ is order continuous we can find $\alpha_{0}\in\Lambda$ such that $\|\ls Tv_{\alpha}\rs\|<\varepsilon$ for every $\alpha\geq\alpha_{0}$. In view of Lemma $3.3$, the converse is true.
\end{proof}

The following lemma will be useful later.

\begin{lemma} \label{le:5}
Let $(V,E)$ $(J,F_{1})$ and $(W,F)$ be lattice-normed spaces, $E$ an atomless vector lattice, $J$ a $\text{(bo)}$-ideal of $W$ and $F_{1}$ an order ideal of $F$.  If a linear dominated operator  $T:V\rightarrow J$ is order narrow then $T:V\rightarrow W$ is order narrow as well. Conversely, if a dominated linear operator  $T:V\rightarrow J$ is such that $T:V\rightarrow W$ is an order narrow then so is $T:V\rightarrow J$.
\end{lemma}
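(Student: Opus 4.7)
The plan is to reduce both implications to a comparison of $(bo)$-convergence of $(Tv_\alpha)$ in $(W,F)$ with $(bo)$-convergence of the same net in $(J,F_{1})$, keeping the net of $MC$-fragment differences $v_\alpha = u_\alpha^{1}-u_\alpha^{2}$ fixed and only transferring the auxiliary net of error bounds between the two vector lattices. Since $u_\alpha^{1}$ and $u_\alpha^{2}$ are disjoint fragments of $u$, additivity of the vector norm on disjoint elements gives $\ls v_\alpha\rs\le \ls u_\alpha^{1}\rs+\ls u_\alpha^{2}\rs=\ls u\rs$, and this uniform bound is all I will use about $(v_\alpha)$ afterwards.

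For the forward direction, starting from a witness $(e_\gamma)\subset F_{1}$ with $e_\gamma\downarrow 0$ in $F_{1}$ and $\ls Tv_\alpha\rs\le e_\gamma$ for $\alpha\ge\alpha(\gamma)$, I would claim the same net also witnesses $(bo)$-convergence in $(W,F)$. The only nontrivial point is $\inf_F e_\gamma=0$: because $F_{1}$ is solid in $F$, any lower bound $f\in F$ of the positive net $(e_\gamma)$ satisfies $0\le f_{+}\le e_\gamma\in F_{1}$, so $f_{+}\in F_{1}$, and hence $f_{+}\le\inf_{F_{1}}e_\gamma=0$, giving $f\le 0$. Domination plays no role in this implication.

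For the converse direction, I would take a witness $(\tilde e_\gamma)\subset F$ with $\tilde e_\gamma\downarrow 0$ in $F$ and $\ls Tv_\alpha\rs\le\tilde e_\gamma$ for $\alpha\ge\alpha(\gamma)$, and construct from it a witness in $F_{1}$. Here the dominated hypothesis enters: pick a dominant $S$ of $T\colon V\to J$, so $S$ takes values in $F_{1}$, and set $h:=S\ls u\rs\in F_{1}$. The preliminary bound $\ls v_\alpha\rs\le\ls u\rs$ then yields $\ls Tv_\alpha\rs\le S\ls v_\alpha\rs\le h$ for every $\alpha$. Define $e_\gamma:=\tilde e_\gamma\wedge h$; since $0\le e_\gamma\le h\in F_{1}$ and $F_{1}$ is solid in $F$, we get $e_\gamma\in F_{1}$. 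The net $(e_\gamma)$ is decreasing, $\inf e_\gamma\le\inf\tilde e_\gamma=0$, and for $\alpha\ge\alpha(\gamma)$ we have $\ls Tv_\alpha\rs\le\tilde e_\gamma\wedge h=e_\gamma$, so $(e_\gamma)$ witnesses $(bo)$-convergence of $(Tv_\alpha)$ in $(J,F_{1})$.

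The main obstacle is the converse, and more precisely the need to produce a single $F_{1}$-upper bound for the whole family $\{\ls Tv_\alpha\rs\}$; once such an $h\in F_{1}$ is in hand, the truncation $\tilde e_\gamma\wedge h$ mechanically converts any $F$-witness into an $F_{1}$-witness. The domination assumption is precisely what supplies this element.
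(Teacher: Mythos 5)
Your proof is correct and follows essentially the same route as the paper: for the converse you use domination to produce a single bound $h=S\ls u\rs\in F_{1}$ and truncate the $F$-witness net with it, exactly as the paper does with $g=\ls T\rs\ls v\rs$ and $z_\alpha=g\wedge y_\alpha$. The only (welcome) difference is that you spell out the forward direction -- showing via solidity of $F_{1}$ that $\inf_{F_{1}}e_\gamma=0$ implies $\inf_{F}e_\gamma=0$ -- where the paper simply calls it obvious.
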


\begin{proof}
The first part is obvious. Let $T:V\rightarrow J$ be a dominated  operator such that $T:V\rightarrow W$ is order narrow. For any $v\in V$ we choose a net $(v_{\alpha})_{\alpha\in\Lambda}$ where every element $v_{\alpha}$ is a difference  $u_{\alpha}^{1}-u_{\alpha}^{2}$ of two $MC$  fragments of $u$ such that   $T(v_{\alpha})\overset{(bo)}\rightarrow 0$, that is, $\ls Tv_{\alpha}\rs\leq y_{\alpha}\downarrow 0$ for some net $(y_{\alpha})_{\alpha\in\Delta}\subset F$. Using the fact that the operator $T$ is dominated and its exact dominant is $\ls T\rs:E\rightarrow F_{1}$, one has that
$$
\ls Tv_{\alpha}\rs\leq\ls T\rs\ls v\rs=g\in F_{1}.
$$
Hence, $\ls Tv_{\alpha}\rs\leq z_{\alpha}\downarrow 0$ where $z_{\alpha}:=g\wedge y_{\alpha}$. So, we have that $(z_{\alpha})_{\alpha\in\Delta}\subset F_{1}$. Thus, the net $(Tv_{\alpha})_{\alpha\in\Delta}$ $\text{(bo)}$-converges to $0$ in $(J,F_{1})$.
\end{proof}

\section{Narrow and $GAM$-compact operators}
\label{sec4}

In this section we investigate connections between narrow and $GAM$-compact operators.

Let $(V,E)$ be a lattice-normed space and let $Y$ be a Banach space. A linear operator $T:V\rightarrow Y$ is called  {\it $(\text{bo})$-norm continuous} whenever it sends every $\text{(bo)}$-convergent net in $V$ to a norm convergent net in $Y$. A linear operator $T:V\rightarrow Y$ is called  {\it generalized $AM$-compact} or {\it $GAM$-compact} for short if for every $\text{(bo)}$-bounded set  $M\subset V$ its image $T(M)$ is a relatively compact set in $Y$. Consider some examples.

{\it Example~1.}~In a particular case when $V=E$ the sets of $GAM$-compact and $AM$-compact operators  from $V$ to $Y$ are equal.

{\it Example~2.}~If $E=\Bbb{R}$ then $V$ is a normed space and the sets of $GAM$-compact and compact operators from $V$ to $Y$ are equal.

{\it Example~3.}~Let $X,Y$ be Banach spaces and let $(\Omega,\Sigma,\mu)$ be a finite measure space. The space $L_{1}(\mu,X)$ is the space  $\mu$-Bochner integrable functions on $\Omega$ with values in $X$, and  $L_{\infty}(\mu,X)$ is the space of $X$-valued $\mu$-Bochner integrable functions on $\Omega$ that are essentially bounded. A function $g\in L_{\infty}(\mu,\mathcal{L}(X,Y))$ is said to have its essential range in the {\it uniformly compact operators} if there is a compact set $C$ in $Y$ such that $g(\omega)x\in C$ for almost all $\omega\in\Omega$ and $x\in X$, $\|x\|\leq 1$. An operator $T:L_{1}(\mu,X)\rightarrow Y$ is called {\it representable measurable kernel} if there is a bounded measurable function $g:\Omega\rightarrow\mathcal{L}(X,Y)$ such that
$$
Tf=\int_{\Omega}fg\,d\mu \,\,\,\,\,\,\,\,\text{for all}\,\,f\in L_{1}(\mu,X) .
$$

\begin{thm}[\cite{Ke}, Theorem~2]\label{t:1}
Let $X$ be a Banach space such that $X^{\star}$ has the Radon-Nikod\'{y}m property. Then there is an isometric isomorphism between the space of compact operators $K(L_{1}(\mu,X),Y)$ and the subspace of $L_{\infty}(\mu,K(X,Y))$ consisting of those functions whose essential range is in the uniformly compact operators. In fact, $T \in K(L_{1}(\mu,X),Y)$ and $g \in L_{\infty}(\mu,K(X,Y))$ are in correspondence if and only if
$$
T(f)=\int_{\Omega}g(\omega)f(\omega)\,d\mu(\omega)\,\,\, \text{for all} \,\,\,f\in L_{1}(\mu,X).
$$
\end{thm}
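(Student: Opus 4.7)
The plan is to prove the two directions separately: first, that any $g \in L_\infty(\mu, K(X,Y))$ with essential range in the uniformly compact operators induces a compact operator via the stated integral, and conversely, that every compact $T$ arises this way, with the Radon-Nikod\'{y}m property of $X^*$ serving to manufacture the kernel $g$.

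For the direction $g \mapsto T$, I would first verify that $T_g(f) := \int_\Omega g(\omega) f(\omega) \, d\mu(\omega)$ is a well-defined bounded operator with $\|T_g\| \leq \|g\|_\infty$, using Bochner integrability of $\omega \mapsto g(\omega) f(\omega)$. Compactness follows from the observation that $T_g(B_{L_1(\mu,X)})$ lies in the closed absolutely convex hull of $\|g\|_\infty \cdot C$, where $C \subset Y$ is the compact set containing $g(\omega) B_X$ a.e. Indeed, for a simple $f = \sum c_i \chi_{E_i} \otimes x_i$ with $\sum |c_i| \mu(E_i) \|x_i\| \leq 1$, the image $T_g(f) = \sum c_i \mu(E_i) g(\omega_i) x_i$ evidently lies in this convex hull; density of simple functions in $L_1(\mu, X)$ finishes the argument, since the closed absolutely convex hull of a compact subset of a Banach space is itself compact.

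For the direction $T \mapsto g$, I would invoke a Dinculeanu-type representation theorem: since $X^*$ has the Radon-Nikod\'{y}m property, the $\mathcal{L}(X, Y)$-valued set function $E \mapsto T(\chi_E \otimes \cdot)$ on $\Sigma$ admits a measurable density $g: \Omega \to \mathcal{L}(X, Y)$, giving the integral formula first on simple functions and then, by density and continuity, on all of $L_1(\mu, X)$, with $\|g\|_\infty \leq \|T\|$. Concretely, for each $y^* \in Y^*$ the scalar functional $f \mapsto \langle y^*, Tf \rangle$ is represented by some $h_{y^*} \in L_\infty(\mu, X^*)$ (this is exactly where RNP of $X^*$ enters, via the identification $L_1(\mu, X)^* \cong L_\infty(\mu, X^*)$), and these pieces are glued into a single operator-valued $g$. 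When $T$ is compact, Lebesgue differentiation realizes $g(\omega)$ a.e.\ as a limit $\lim_{E \downarrow \omega} \mu(E)^{-1} T(\chi_E \otimes \cdot)$, and each averaging operator sends $B_X$ into the compact set $\overline{T(B_{L_1(\mu, X)})}$, forcing $g(\omega) \in K(X,Y)$.

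The main obstacle I anticipate is the final step: extracting a \emph{single} compact $C \subset Y$ that uniformly contains $g(\omega) B_X$ for almost every $\omega$, rather than just the pointwise a.e.\ compactness. I would handle this by a separability reduction — fix a countable dense sequence $\{x_n\} \subset B_X$, note that each $g(\cdot) x_n$ is essentially valued in the compact set $K := \overline{T(B_{L_1(\mu, X)})}$, so outside a single null set one has $g(\omega) x_n \in K$ for all $n$ simultaneously, and then use the uniform bound $\|g(\omega)\| \leq \|g\|_\infty$ together with equicontinuity to pass from the dense sequence to all of $B_X$, yielding $g(\omega) B_X \subset K$ a.e. Once this uniform compactness is established, the isometric equality $\|T\| = \|g\|_\infty$ results from combining the two norm inequalities obtained along the way, delivering the claimed isometric isomorphism.
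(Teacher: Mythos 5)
This statement is quoted verbatim from Andrews' paper (reference \cite{Ke}, Theorem~2); the present paper gives no proof of it and uses it only as an imported tool in Lemma~4.2, so there is no argument in the paper to compare yours against. Judged on its own, your sketch follows the standard representation-theoretic route, and the easy direction ($g\mapsto T_g$) is essentially right: boundedness with $\|T_g\|\le\|g\|_\infty$, plus the observation that $T_g$ maps the unit ball into the closed absolutely convex hull of the compact set $C$, which is again compact. (Minor slip: for simple $f$ the value $T_g(f)$ is a sum of \emph{averages} $\int_{E_i}g(\omega)x_i\,d\mu$, not point evaluations $\mu(E_i)g(\omega_i)x_i$; the averages still lie in the relevant convex hull, so the conclusion stands.) Your closing separability reduction for the uniform compactness of the essential range is also sound, granted that $g$ has already been produced and that one works on the separable subspace generated by the situation.

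The genuine gap is in the construction of $g$ itself. The Radon--Nikod\'ym property of $X^{\star}$ gives you only the duality $L_1(\mu,X)^{\star}\cong L_\infty(\mu,X^{\star})$, hence for each $y^{\star}\in Y^{\star}$ a scalar kernel $h_{y^{\star}}$, with an exceptional null set depending on $y^{\star}$ and with $Y^{\star}$ possibly nonseparable. ``Gluing these into a single operator-valued $g$'' is precisely the hard step: membership in $L_\infty(\mu,K(X,Y))$ requires $g$ to be Bochner measurable, i.e.\ essentially separably valued in the \emph{operator norm}, and $\mathcal{L}(X,Y)$ does not in general have the Radon--Nikod\'ym property, so the set function $E\mapsto T(\chi_E\otimes\cdot)$ need not admit a strongly measurable density for arbitrary bounded $T$. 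The compactness of $T$ must enter the construction of $g$ (e.g.\ via the norm compactness of $T^{\star}(B_{Y^{\star}})$ in $L_\infty(\mu,X^{\star})$, or via operator-norm convergence of the averaging martingale $\mu(E)^{-1}T(\chi_E\otimes\cdot)$ along a filtration), not merely a posteriori to verify that each $g(\omega)$ is a compact operator. Your proposal invokes Lebesgue differentiation of a $g$ whose existence and measurability are exactly what needs to be proved, so as written the converse direction is circular at its crucial point.
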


\begin{lemma}
Let $X$ be a Banach space such that $X^{\star}$ has the Radon-Nikod\'{y}m property.  Then every  representable measurable kernel operator $T:L_{1}(\mu,X)\rightarrow Y$ whose kernel  have the essential range in the  uniformly compact operators is $GAM$-compact.
\end{lemma}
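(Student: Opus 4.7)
The plan is to invoke Keller's theorem (Theorem \ref{t:1}) to recognise $T$ as a norm-compact operator from the Banach space $L_{1}(\mu,X)$ into $Y$, and then to observe that every $\text{(bo)}$-bounded subset of the lattice-normed space $L_{1}(\mu,X)$ is automatically bounded in the Banach-space norm $\|\cdot\|_{L_{1}(\mu,X)}$. Once these two facts are in hand, the conclusion is immediate from the definition of a compact operator.

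First I would state that under our hypotheses the representing kernel $g\in L_{\infty}(\mu,\mathcal{L}(X,Y))$ has its essential range in the uniformly compact operators and, in particular, $g(\omega)\in K(X,Y)$ for almost every $\omega$. Since $X^{\star}$ has the Radon--Nikod\'ym property, Theorem \ref{t:1} applies and produces an isometric isomorphism between $K(L_{1}(\mu,X),Y)$ and the appropriate subspace of $L_{\infty}(\mu,K(X,Y))$; under this correspondence the operator $T$ given by $T(f)=\int_{\Omega}g(\omega)f(\omega)\,d\mu(\omega)$ is the image of $g$. Hence $T\in K(L_{1}(\mu,X),Y)$, i.e.\ $T$ sends bounded sets of $L_{1}(\mu,X)$ to relatively compact subsets of $Y$.

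Second, I would unpack the $\text{(bo)}$-bounded hypothesis. Identifying $L_{1}(\mu,X)$ with the lattice-normed space $E(X)$ for $E=L_{1}(\mu)$ (as in the examples of Section 2), the vector norm is $\ls f\rs(\omega)=\|f(\omega)\|_{X}$. If $M\subset L_{1}(\mu,X)$ is $\text{(bo)}$-bounded then there is $e\in L_{1}(\mu)_{+}$ with $\ls f\rs\le e$ for all $f\in M$, whence
$$
\|f\|_{L_{1}(\mu,X)}=\int_{\Omega}\|f(\omega)\|_{X}\,d\mu(\omega)\le\int_{\Omega}e(\omega)\,d\mu(\omega)=\|e\|_{L_{1}(\mu)},
$$
so $M$ is norm-bounded in $L_{1}(\mu,X)$. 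Combining this with compactness of $T$ yields that $T(M)$ is relatively compact in $Y$, which is exactly $GAM$-compactness.

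There is really no serious obstacle here: everything follows by citing Theorem \ref{t:1} and checking that $\text{(bo)}$-boundedness implies norm boundedness in this concrete mixed-norm setting. The only mildly delicate point is being explicit about the identification of the lattice norm on $L_{1}(\mu,X)$ with the pointwise $X$-norm and the corresponding description of $\text{(bo)}$-bounded sets; once that is spelled out, the argument is a one-line consequence of Keller's theorem.
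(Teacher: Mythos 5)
Your proposal is correct and follows essentially the same route as the paper: identify $T$ as a compact operator via the representation theorem (Theorem \ref{t:1}), and observe that a $\text{(bo)}$-bounded set in $L_{1}(\mu,X)$ is norm-bounded by integrating the dominating function $e$. The paper's proof is exactly this estimate followed by an appeal to Theorem \ref{t:1}; your write-up just spells out the identification with $E(X)$ a bit more explicitly.
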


\begin{proof}
Let $M\subset V$ be a $(\text{bo})$-bounded set. Then there exists $e\in L_{1}(\mu)$, $e\geq 0$, such that $\ls v\rs\leq e$ for every $v\in M$. This implies that the set $M$ is norm bounded in $L_{1}(\mu, X)$
$$
\|v\|_{L_{1}(\mu, X)}=\int_{\Omega}\ls v\rs d\,\mu=
\int_{\Omega}\| v(\cdot)\|_{X} d\,\mu\leq\int_{\Omega}e d\,\mu=r
$$
By Theorem $4.1$, the lemma is proved.
\end{proof}

\begin{lemma}
Let  $(V,E)$ be a Banach space with a mixed norm and let $X$ be a Banach space. Then every  $GAM$-compact operator $T:V\rightarrow X$ is norm bounded.
\end{lemma}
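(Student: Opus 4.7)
The plan is to argue by contradiction, exploiting the Banach-lattice completeness of $E$ (which is part of the hypothesis that $(V,E)$ is a Banach space with mixed norm) to convert smallness in the mixed norm into $(bo)$-boundedness.

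First I would suppose that $T$ is not norm bounded. Then for each $n\in\Bbb{N}$ one can find $u_n\in V$ with $\|Tu_n\|\ge n\cdot 2^n\,\||u_n|\|$; in particular $u_n\neq 0$. Setting $v_n:=u_n/(2^n\||u_n|\|)$ gives $\||v_n|\|=2^{-n}$ while $\|Tv_n\|\ge n$.

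Next I would show that the sequence $(v_n)$ is $(bo)$-bounded. Because $\||v_n|\|=\|\ls v_n\rs\|$ and $E$ is a Banach lattice, the series $\sum_{n=1}^{\infty}\ls v_n\rs$ is absolutely convergent and hence norm-convergent to some element $e\in E$. The partial sums $s_N:=\sum_{n=1}^N\ls v_n\rs$ are positive and increasing, with $s_{N+k}-s_N=\sum_{n=N+1}^{N+k}\ls v_n\rs\ge 0$ for every $k$; since the positive cone $E_+$ is norm-closed, letting $k\to\infty$ yields $e-s_N\ge 0$, i.e.\ $s_N\le e$ for every $N$. In particular $\ls v_n\rs\le s_n\le e$, so $\{v_n:n\in\Bbb{N}\}\subset[0,e]$ is $(bo)$-bounded in $V$.

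Finally, the $GAM$-compactness of $T$ forces $\{Tv_n:n\in\Bbb{N}\}$ to be relatively compact, hence norm-bounded, in $X$, contradicting $\|Tv_n\|\ge n\to\infty$. The only non-routine step is the second one, where the Banach-lattice structure of $E$ is used to produce a single $E_+$-majorant $e$ of the sequence $(\ls v_n\rs)$ from the summability condition $\sum\|\ls v_n\rs\|<\infty$; this is the main conceptual point, after which the rest is just rescaling and the elementary fact that relatively compact subsets of a normed space are bounded.
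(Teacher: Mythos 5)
Your proposal is correct and follows essentially the same route as the paper's proof: contradiction via a sequence with $\|\ls v_n\rs\|\le 2^{-n}$ and $\|Tv_n\|\to\infty$, dominated by $e=\sum_{n=1}^{\infty}\ls v_n\rs$, so that $GAM$-compactness forces $(Tv_n)$ to be relatively compact. The only difference is that you spell out the details the paper leaves implicit, namely that the absolutely convergent series converges in the Banach lattice $E$ and that norm-closedness of $E_+$ makes its sum an upper bound for each term.
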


\begin{proof}
If $T$ were unbounded then we would have a sequence $(v_{n})\subset V$ with $\|\ls v_{n}\rs\|\leq\frac{1}{2^{n}}$ and $\|Tv_{n}\|\rightarrow\infty$. The sequence $(v_{n})$  is order bounded by the element $e\in E_{+}$, $e:=\sum\limits_{n=1}^{\infty}\ls v_{n}\rs$. Hence, the sequence  $(Tv_{n})$ is relatively compact, -- a contradiction.
\end{proof}

\begin{cor}
Let  $(V,E)$ be a Banach space with a mixed norm, $E$ an order continuous Banach lattice and $X$ a Banach space. Then every  $GAM$-compact operator $T:V\rightarrow X$ is $\text{(bo)}$-norm continuous.
\end{cor}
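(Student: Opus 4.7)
The plan is to reduce $\text{(bo)}$-convergence to mixed-norm convergence using the order continuity of the norm on $E$, and then to invoke Lemma~4.3 to translate mixed-norm convergence in $V$ into norm convergence in $X$ via the bounded operator $T$.

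First, by Lemma~4.3 the $GAM$-compact operator $T:V\to X$ is norm bounded from $(V,\||\cdot|\|)$ to $X$, so there exists $C>0$ with $\|Tv\|\le C\||v|\|$ for every $v\in V$. This is the only place where $GAM$-compactness enters the argument; once it is used, only mixed-norm boundedness is needed.

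Next, take an arbitrary net $(v_\alpha)_{\alpha\in\Delta}$ in $V$ with $v_\alpha\overset{(bo)}\to v$. By the definition of $\text{(bo)}$-convergence there is a decreasing net $(e_\gamma)_{\gamma\in\Gamma}$ in $E$ with $\inf_\gamma e_\gamma=0$ and, for each $\gamma$, an index $\alpha(\gamma)$ such that $\ls v-v_\alpha\rs\le e_\gamma$ for all $\alpha\ge\alpha(\gamma)$. Since $E$ is an order continuous Banach lattice, $e_\gamma\downarrow 0$ implies $\|e_\gamma\|\to 0$. Combined with monotonicity of the lattice norm on $E$, this yields
$$
\||v-v_\alpha|\|=\|\ls v-v_\alpha\rs\|\le\|e_\gamma\|\qquad(\alpha\ge\alpha(\gamma)),
$$
so $\||v-v_\alpha|\|\to 0$.

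Finally, boundedness of $T$ with respect to the mixed norm gives
$$
\|Tv-Tv_\alpha\|=\|T(v-v_\alpha)\|\le C\||v-v_\alpha|\|\longrightarrow 0,
$$
which is the desired $\text{(bo)}$-norm continuity. There is no real obstacle here; the only subtle point to flag is the passage from order convergence of $(e_\gamma)$ in $E$ to convergence of $\|e_\gamma\|$ to $0$, which is precisely the order continuity hypothesis on $E$. Without it one could only conclude $\text{(bo)}$-convergence of $Tv_\alpha$ in $X$ when $X$ is viewed inside an appropriate lattice-normed framework, not norm convergence.
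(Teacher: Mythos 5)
Your proof is correct, and it follows the route the paper clearly intends: the corollary is stated without proof immediately after Lemma~4.3, and the expected argument is exactly yours --- use Lemma~4.3 to get boundedness of $T$ with respect to the mixed norm, then use order continuity of the norm on $E$ to turn the dominating net $e_\gamma\downarrow 0$ from the definition of $(bo)$-convergence into mixed-norm convergence of $v_\alpha$ to $v$. Your closing remark correctly isolates the one hypothesis doing the work.
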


The following lemma is well known [11,\,p.14].

\begin{lemma}
Let  $(x_{i})_{i=1}^{n}$ be a finite collection of vectors in a finite dimensional normed space  $X$ and let $(\lambda_{i})_{i=1}^{n}$ be a collection of reals with $0\leq\lambda_{i}\leq 1$ for each $i$. Then there exists a collection $(\theta_{i})_{i=1}^{n}$ of numbers $\theta_{i}\in\{0,1\}$ such that
$$
\Big\|\sum_{i=1}^{n}(\lambda_{i}-\theta_{i})x_{i}\Big\|\leq\frac{\text{dim}(X)}{2}\max_{i}\|x_{i}\|
$$
\end{lemma}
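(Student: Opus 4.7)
The plan is to reduce to the case where only at most $d:=\dim(X)$ of the coefficients $\lambda_i$ remain in the open interval $(0,1)$, exploiting linear dependence to preserve the target sum $\sum_i\lambda_i x_i$ while rounding the rest of the coordinates to $\{0,1\}$.

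First I would dispose of the easy case $n\leq d$: for each $i$ pick $\theta_i\in\{0,1\}$ to be the nearest integer to $\lambda_i$, so that $|\lambda_i-\theta_i|\leq 1/2$. The triangle inequality gives
$$
\Big\|\sum_{i=1}^n(\lambda_i-\theta_i)x_i\Big\|\leq \frac{n}{2}\max_i\|x_i\|\leq\frac{d}{2}\max_i\|x_i\|,
$$
which is the desired bound. For the general case, the key device is a reduction step that keeps $\sum_i\lambda_i x_i$ invariant while strictly decreasing the number of fractional coordinates. Concretely, say we have $\mu\in[0,1]^n$ with $\sum_i\mu_i x_i=\sum_i\lambda_i x_i$ and put $S:=\{i:\mu_i\in(0,1)\}$. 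If $|S|>d$, the family $(x_i)_{i\in S}$ is linearly dependent in $X$, so there exists $\alpha\in\Bbb{R}^n$ with $\mathrm{supp}(\alpha)\subseteq S$, $\alpha\neq 0$, and $\sum_i\alpha_i x_i=0$. Since $\mu_i\in(0,1)$ for each $i\in S$ and $\alpha$ vanishes off $S$, the set $\{t\in\Bbb{R}:\mu+t\alpha\in[0,1]^n\}$ is a compact interval with $0$ in its interior. Replacing $\mu$ by $\mu+t_\star\alpha$, where $t_\star$ is an endpoint of this interval, preserves the identity $\sum_i\mu_i x_i=\sum_i\lambda_i x_i$ but forces at least one new coordinate to land in $\{0,1\}$, so $|S|$ strictly drops.

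Iterating this step at most finitely many times yields a final $\mu\in[0,1]^n$ with $\sum_i\mu_i x_i=\sum_i\lambda_i x_i$ and $|S|\leq d$. I then define $\theta_i:=\mu_i\in\{0,1\}$ for $i\notin S$, and for $i\in S$ pick $\theta_i\in\{0,1\}$ with $|\mu_i-\theta_i|\leq 1/2$. Since $\sum_i\mu_i x_i=\sum_i\lambda_i x_i$,
$$
\Big\|\sum_{i=1}^n(\lambda_i-\theta_i)x_i\Big\|=\Big\|\sum_{i\in S}(\mu_i-\theta_i)x_i\Big\|\leq\frac{|S|}{2}\max_i\|x_i\|\leq\frac{d}{2}\max_i\|x_i\|.
$$
I do not anticipate any substantive obstacle: the only point worth verifying carefully is the well-definedness of the reduction step, which reduces to noting that $|S|>d=\dim(X)$ forces a nontrivial linear relation among $(x_i)_{i\in S}$, and that strict interiority of the $\mu_i$ on $S$ guarantees there is genuine room to move along $\alpha$ inside $[0,1]^n$ until a new coordinate hits an endpoint.
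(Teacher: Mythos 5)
Your argument is correct and complete. Note that the paper itself offers no proof of this lemma at all: it is stated as ``well known'' with a citation to Kadets--Kadets, \emph{Rearrangements of series in Banach spaces}, p.~14, so there is nothing in the text to compare against line by line. What you have reconstructed is essentially the standard proof from that literature (sometimes called the rounding lemma): use a nontrivial linear relation among any $d+1$ of the vectors to walk the coefficient vector $\mu$ along the kernel direction inside the cube $[0,1]^n$ until a coordinate hits an endpoint, thereby reducing the number of fractional coordinates to at most $d=\dim(X)$ while keeping $\sum_i\mu_i x_i$ fixed, and then round the surviving fractional coordinates to the nearest integer with error at most $\tfrac12\|x_i\|$ each. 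The two small points that needed care --- that the feasible interval $\{t:\mu+t\alpha\in[0,1]^n\}$ is compact with $0$ in its interior (which uses $\alpha\neq 0$ and $\mu_i\in(0,1)$ on the support of $\alpha$), and that coordinates outside $S$ are untouched because $\alpha$ is supported on $S$ --- are both handled correctly, so the induction on $|S|$ terminates and the final estimate $\tfrac{|S|}{2}\max_i\|x_i\|\leq\tfrac{d}{2}\max_i\|x_i\|$ goes through. Your write-up could serve as a self-contained substitute for the external citation.
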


For the rest of the section $(V,E)$ is assumed to be a Banach-Kantorovich space, $E$ is an atomless order complete vector lattice,
$X$ is a Banach space and $T:V\rightarrow X$ is $\text{(bo)}$-norm continuous linear operator.

\begin{lemma}
Let  $x\in V$ . Then there exist two $MC$ fragments $x_{1},x_{2}$ of $x$ such that $\|Tx_{1}\|-\|Tx_{2}\|=0$.
\end{lemma}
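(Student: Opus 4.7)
The plan is to apply the classical intermediate value theorem to a continuous interpolation between the zero fragment of $x$ and $x$ itself. First I would construct a family $\{x_t\}_{t\in[0,1]}$ of fragments of $x$ with $x_0=0$, $x_1=x$, $x_s$ a fragment of $x_t$ whenever $s\leq t$, and $t\mapsto x_t$ $(bo)$-continuous. Given such a family, define
\[
g(t):=\|Tx_t\|-\|T(x-x_t)\|.
\]
Since $T$ is $(bo)$-norm continuous and the norm of $X$ is continuous, $g:[0,1]\to\Bbb{R}$ is continuous, and $g(0)=-\|Tx\|\leq 0\leq\|Tx\|=g(1)$. The intermediate value theorem then furnishes $t^{\ast}\in[0,1]$ with $g(t^{\ast})=0$, and the choice $x_1:=x_{t^{\ast}}$, $x_2:=x-x_{t^{\ast}}$ gives a pair of $MC$ fragments of $x$ with $\|Tx_1\|-\|Tx_2\|=0$, as required.

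For the construction of $\{x_t\}$, write $e:=\ls x\rs$. Atomlessness together with order completeness of $E$ make the Boolean algebra of components of $e$ atomless and Dedekind complete, and a Lyapunov-type argument (iterated bisection along the dyadic rationals $D\subset[0,1]$, with suprema and infima at the remaining reals) produces a monotone $(o)$-continuous family $\{e_t\}_{t\in[0,1]}$ of components of $e$ with $e_0=0$, $e_1=e$ and every difference $e_t-e_s$ ($s\leq t$) again a component of $e$. Using the decomposability axiom inductively along $D$, I would lift this to a nested family of fragments $x_r$ of $x$ with $\ls x_r\rs=e_r$, and then extend to arbitrary $t\in[0,1]$ as the $(bo)$-limit of $x_r$ as $r\to t$ along $D$; this limit exists by $(bo)$-completeness of $V$, and the identity $\ls x_t-x_s\rs=e_t-e_s$ for $s\leq t$ transfers $(o)$-continuity of the $e$-chain to $(bo)$-continuity of $t\mapsto x_t$.

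The principal obstacle is the construction of $\{x_t\}$: each invocation of decomposability is a genuine choice, so the lifts must be coordinated along $D$ to stay nested with respect to all previously selected fragments, and one must ensure \emph{two-sided} $(o)$-continuity of $t\mapsto e_t$ — an exact zero of $g$ (not merely a sign change) is needed to produce $\|Tx_1\|=\|Tx_2\|$. Once the continuous path is in hand, the $(bo)$-norm continuity of $T$ makes the IVT step entirely automatic.
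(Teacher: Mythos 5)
Your route --- building a $(bo)$-continuous increasing path of fragments from $0$ to $x$ and applying the intermediate value theorem to $g(t)=\|Tx_t\|-\|T(x-x_t)\|$ --- is genuinely different from the paper's. The paper instead applies Zorn's lemma directly to the poset $D=\{y\sqsubseteq x_1:\|T(x_1-y)\|-\|T(x_2+y)\|\geq 0\}$ (chains have suprema in $D$ by the $(bo)$-norm continuity of $T$), takes a maximal element $y_0$, and refutes $\alpha=\|T(x_1-y_0)\|-\|T(x_2+y_0)\|>0$ by using atomlessness together with $(bo)$-norm continuity to extract a nonzero fragment $y\sqsubseteq x_1-y_0$ with $\|Ty\|<\alpha/3$; adding $y$ to $y_0$ then contradicts maximality. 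This sidesteps any need for a globally continuous parametrization.

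The gap in your proposal is exactly the step you flag but do not resolve: two-sided order continuity of $t\mapsto e_t$. Iterated dyadic bisection gives, at level $m$, a partition of $e$ into $2^m$ nonzero disjoint components refining the previous level, with $e_{k/2^m}$ a partial sum; but at a non-dyadic $t$ the jump $\inf_{r>t}e_r-\sup_{r<t}e_r$ equals the infimum of the nested sequence of level-$m$ pieces sitting at position $t$, and atomlessness does not force this infimum to vanish (in the measure algebra of $[0,1]$ one can bisect so that some nested intersection retains positive measure). Wherever $e_t$ jumps, $x_t$ is not $(bo)$-continuous, $g$ is discontinuous, and the IVT fails. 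The standard repairs are either (i) to replace the dyadic construction by a \emph{maximal} chain $C$ in the Boolean algebra of components of $\ls x\rs$ --- maximality plus atomlessness shows $C$ is gap-free and closed under suprema and infima, hence connected in its order topology, and one runs the IVT on $C$ itself rather than on $[0,1]$ (a maximal chain need not be order-isomorphic to $[0,1]$) --- or (ii) to abandon the global path and argue locally at a maximal element, which is precisely what the paper does. By contrast, your lifting of the component chain to fragments of $x$ is unproblematic (use the Boolean isomorphism $\mathcal{B}(E)\cong\mathcal{B}(V)$ and set $x_c:=\pi_c x$ rather than coordinating ad hoc choices from decomposability), and the IVT endgame is fine once continuity is secured.
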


\begin{proof}
Fix any couple $x_{1},x_{2}$ of $MC$ fragments of $x$. If $\|Tx_{1}\|-\|Tx_{2}\|=0$ then there is nothing to prove. Let $\|Tx_{1}\|-\|Tx_{2}\|>0$. Consider the partially ordered set
$$
D:=\{y\sqsubseteq x_{1}:\|T(x_{1}-y)\|-\|T(x_{2}+y)\|\geq 0\}
$$
where $y_{1}\leq y_{2}$ if and only if $y_{1}\sqsubseteq y_{2}$. If $B\subset D$ is a chain then $y^{\star}=\vee B\in D$ by the $\text{(bo)}$-norm continuity of $T$. By the Zorn lemma, there is a maximal element $y_{0}\in D$. Now we show  $\|T(x_{1}-y_{0})\|-\|T(x_{2}+y_{0})\|=0$. Suppose on the  contrary that
$$
\alpha=\|T(x_{1}-y_{0})\|-\|T(x_{2}+y_{0})\|> 0.
$$
Since $E$ is atomless, we can choose  a fragment $0\neq y\sqsubseteq(x_{1}-y_{0})$ with $|\|Ty|\|<\frac{\alpha}{3}$. Since $y_{0}\bot y$, $y_{0}+y\sqsubseteq x_{1}$ and
$$
\|T(x_{1}-y_{0}-y)\|-\|T(x_{2}+y_{0}+y)\|\geq
$$
$$
\geq\|T(x_{1}-y_{0})\||-\|Ty\|-\|T(x_{2}+y_{0})\|-\|Ty\|>\frac{\alpha}{3},
$$
that contradicts the maximality of $y_{0}$.
\end{proof}

\begin{lemma}
Let $v\in V$ and $(v_{n})_{n=1}^{\infty}$ be a disjoint tree on $v$. If  $\|Tx_{2n}\|=\|Tx_{2n+1}\|$ for every $n\leq 1$ then
$$
\lim\limits_{m\rightarrow\infty}\max\limits_{2^{m}\leq i< 2^{m+1}}\|Tv_{i}\|=0
$$
\end{lemma}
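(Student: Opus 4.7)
I argue by contradiction. Suppose the conclusion fails: there exist $\varepsilon>0$, strictly increasing levels $m_k\to\infty$ and indices $j_k$ with $2^{m_k}\leq j_k<2^{m_k+1}$ such that $\|Tv_{j_k}\|\geq\varepsilon$ for every $k$. My aim is to extract from $(v_{j_k})$ an infinite pairwise disjoint subfamily whose $T$-images have norm at least $\varepsilon$, and then use the $(bo)$-norm continuity of $T$ to reach a contradiction.

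\textbf{Disjointness forces $\|Tu_k\|\to 0$.} First I would verify the following: if $(u_k)$ is any pairwise disjoint sequence of fragments of $v$, then $\|Tu_k\|\to 0$. Indeed, the $\ls u_k\rs$ are pairwise disjoint in $E$ and each dominated by $\ls v\rs$, so the partial sums of $\sum_k\ls u_k\rs$ are increasing and bounded above by $\ls v\rs$. Since $E$ is order complete, the series converges in $(o)$, whence $\ls u_k\rs\overset{(o)}\to 0$, i.e.\ $u_k\overset{(bo)}\to 0$. The standing $(bo)$-norm continuity of $T$ then gives $\|Tu_k\|\to 0$.

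\textbf{Ramsey dichotomy and the sibling trick.} Regard the indices as nodes of the binary tree, where $j$ is an ancestor of $j'$ iff $v_{j'}\sqsubseteq v_j$. Apply Ramsey's theorem to the comparability graph on $(j_k)$: the sequence contains either an infinite antichain of pairwise incomparable indices or an infinite chain $j_{k_1},j_{k_2},\ldots$ with each $j_{k_l}$ a proper ancestor of $j_{k_{l+1}}$. In the antichain case, incomparable tree-indices automatically give pairwise disjoint fragments, because the root-to-node paths diverge at some internal node and place the two $v$'s in complementary subtrees, so $(v_{j_k})$ along the antichain is already the desired disjoint family. In the chain case, let $s(j)$ denote the tree-sibling of $j$; the hypothesis $\|Tv_{2n}\|=\|Tv_{2n+1}\|$ yields $\|Tv_{s(j_{k_l})}\|=\|Tv_{j_{k_l}}\|\geq\varepsilon$. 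For $l<l'$, the node $\mathrm{parent}(j_{k_{l'}})$ lies on the descent path from $j_{k_l}$ to $j_{k_{l'}}$ and is therefore a descendant of $j_{k_l}$ (or equal to it), so $v_{s(j_{k_{l'}})}\sqsubseteq v_{\mathrm{parent}(j_{k_{l'}})}\sqsubseteq v_{j_{k_l}}$, while $v_{s(j_{k_l})}\bot v_{j_{k_l}}$; hence $v_{s(j_{k_{l'}})}\bot v_{s(j_{k_l})}$. Thus $(v_{s(j_{k_l})})_l$ is a pairwise disjoint sequence with $T$-images of norm $\geq\varepsilon$. In either case Step~1 yields the desired contradiction.

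\textbf{Main obstacle.} The central difficulty is that the extracted sequence $(j_k)$ is not a priori pairwise disjoint: the large-$T$-norm elements could concentrate on a single descending branch of the tree, so naive passage to a subsequence at deeper levels need not give disjointness. The sibling trick is exactly what repairs the chain case: the balance hypothesis produces, from every large-norm node, a sibling of equal norm disjoint from it, and along a nested chain these siblings sit nicely inside the previous chain element, which yields pairwise disjointness for free. Without the equal-norm splitting (guaranteed by the construction in Lemma~4.5) no such extraction would be available, which is precisely why the balance hypothesis is indispensable.
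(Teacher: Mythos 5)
Your proof is correct, and it rests on the same analytical backbone as the paper's: extract an infinite pairwise disjoint family of fragments of $v$ whose $T$-images have norm bounded below by some $\varepsilon>0$, observe that any such disjoint family is $(bo)$-null (disjoint positive elements below $\ls v\rs$ have order-convergent partial sums in the order complete lattice $E$), and contradict the standing $(bo)$-norm continuity of $T$; in both arguments the balance hypothesis $\|Tv_{2n}\|=\|Tv_{2n+1}\|$ serves exactly to defeat concentration of the large-norm nodes on a single branch. Where you differ is in the combinatorial extraction. The paper localizes the $\limsup$ by introducing the subtree quantities $\varepsilon_n=\limsup_m\max\{\|Tv_i\|:2^m\le i<2^{m+1},\,v_i\sqsubseteq v_n\}$, notes that the full value $\varepsilon$ propagates down some branch (its relation $(\star)$), and greedily descends that branch, at each level peeling off a node distinct from the "heavy" one --- using the sibling equality to guarantee such a node of norm $\ge\varepsilon/2$ exists --- so that the peeled nodes are automatically pairwise disjoint. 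You instead collect all large-norm nodes first and invoke Ramsey's dichotomy: an infinite antichain is already pairwise disjoint, and an infinite chain is converted into a disjoint family by passing to siblings, whose nesting $v_{s(j_{k_{l'}})}\sqsubseteq v_{j_{k_l}}\bot v_{s(j_{k_l})}$ you verify correctly. Your route buys a cleaner case analysis at the cost of an (admittedly soft) appeal to Ramsey's theorem, whereas the paper's greedy descent is self-contained but notationally muddier; you also state explicitly the final step (disjointness forces $\|Tu_k\|\to0$) that the paper's proof leaves implicit. No gaps.
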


\begin{proof}
Put $\gamma_{m}=\max\limits_{2^{m}\leq i<2^{m+1}}\|Tv_{i}\|$  and $\varepsilon=\limsup\limits_{m\rightarrow\infty}\gamma_{m}$. Suppose on the contrary that $\varepsilon>0$. Then for each $n\in\Bbb{N}$ we set
$$
\varepsilon_{n}=\limsup\limits_{m\rightarrow\infty}\max\limits_{2^{m}\leq i<2^{m+1},\,v_{i}\sqsubseteq v_{n}}\|Tv_{n}\|.
$$
Hence, for each $m\in\Bbb{N}$ one has
$$
\max\limits_{2^{m}\leq i<2^{m+1}}\varepsilon_{i}=\varepsilon.\,\,\,\,\,(\star)
$$
Now we are going to construct a sequence of mutually disjoint elements $(v_{n_{j}})_{j=1}^{\infty}$ such that $\|Tv_{n_{j}}\|\leq\frac{\varepsilon}{2}$. At the first step we choose $m_{1}$ such that $\max\limits_{2^{m_{1}}\leq i<2^{m_{1}+1}}\|Tv_{i}\|\geq\frac{\varepsilon}{2}$. In accordance with $(\star)$, we choose $i_{1}$, $2^{m_{1}}\leq i_{1}<2^{m_{1}+1}$ so that $\varepsilon_{m_{1}}=\varepsilon$. Using $\|Tv_{2n}\|=\|Tv_{2n+1}\|$, we choose $n_{1}\neq i_{2}$, $2^{m_{1}}\leq i<2^{m_{1}+1}$ so that $\|Tv_{n_{1}}\|\leq\frac{\varepsilon}{2}$. At the second step we choose $m_{2}>m_{1}$ so that
$$
\max\limits_{2^{m}\leq i<2^{m+1},\,v_{i}\sqsubseteq v_{i_{1}}}\|Tv_{i}\|\geq\frac{\varepsilon}{2}.
$$
In accordance with  $(\star)$, we choose $i_{2}$, $2^{m_{2}}\leq i_{2}<2^{m_{2}+1}$ so that $\varepsilon_{i_{2}}=\varepsilon$. Then we choose $m_{2}\neq i_{2}$, $2^{m_{2}}\leq i_{2}<2^{m_{2}+1}$ so that $\|Tv_{m_{2}}\|\geq\frac{\varepsilon}{2}$. Proceeding further, we construct the desired sequence. Indeed, $\|Tv_{m_{i}}\|\geq\frac{\varepsilon}{2}$ by the construction and mutually disjoint for $v_{m_{l}},v_{m_{j}}$, $j\neq l$ is guaranteed by the condition $m_{j}\neq i_{j}$. The elements  $v_{m_{j+l}}$ are fragments of $v_{i_{j}}$ which are disjoint to $v_{m_{j}}$.
\end{proof}

The following lemma indicates that  operators with finite dimensional range are also narrow.

\begin{lemma}
If $X$ is finite dimensional, then $T$ is narrow.
\end{lemma}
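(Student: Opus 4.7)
The plan is to combine the three preceding lemmas in a standard tree-truncation argument. Fix $v \in V$ and $\varepsilon > 0$; the goal is to produce $MC$ fragments $u_1, u_2$ of $v$ with $\|T(u_1-u_2)\| < \varepsilon$.

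First I would use Lemma~4.5 iteratively to construct a disjoint tree $(v_n)_{n=1}^{\infty}$ on $v$ with the additional property $\|Tv_{2n}\| = \|Tv_{2n+1}\|$ for every $n\geq 1$. Starting from $v_1 := v$, if $v_n$ has already been defined, Lemma~4.5 applied to $v_n$ furnishes $MC$ fragments $v_{2n}, v_{2n+1}$ of $v_n$ with $\|Tv_{2n}\| = \|Tv_{2n+1}\|$, and we set these to be the children of $v_n$. The decomposability of $V$ together with the atomlessness of $E$ makes Lemma~4.5 applicable at every node.

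With this tree in hand, Lemma~4.6 gives $\gamma_m := \max_{2^m\leq i<2^{m+1}} \|Tv_i\| \to 0$ as $m\to\infty$. Choose $m$ so large that $\gamma_m < \varepsilon / \dim(X)$. The $2^m$ elements $v_{2^m}, v_{2^m+1}, \ldots, v_{2^{m+1}-1}$ are pairwise disjoint fragments of $v$ with $v = \coprod_{i=2^m}^{2^{m+1}-1} v_i$.

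Now I would apply the combinatorial Lemma~4.4 with $x_i := Tv_i$ and $\lambda_i := 1/2$ for $2^m \leq i < 2^{m+1}$: it yields a choice of $\theta_i \in \{0,1\}$ such that
\[
\Big\|\sum_{i=2^m}^{2^{m+1}-1} (\tfrac{1}{2}-\theta_i)\,Tv_i\Big\| \leq \frac{\dim(X)}{2}\,\gamma_m.
\]
Setting $A := \{i : \theta_i = 0\}$ and $B := \{i : \theta_i = 1\}$, the left-hand side equals $\tfrac{1}{2}\,\bigl\|T\bigl(\sum_{i\in A} v_i - \sum_{i\in B} v_i\bigr)\bigr\|$. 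Define
\[
u_1 := \sum_{i\in A} v_i, \qquad u_2 := \sum_{i\in B} v_i.
\]
Since $\{v_i : 2^m\leq i < 2^{m+1}\}$ are pairwise disjoint fragments whose disjoint sum is $v$, the vectors $u_1, u_2$ are disjoint fragments of $v$ with $u_1 + u_2 = v$, i.e.\ $MC$ fragments of $v$. The estimate above gives $\|T(u_1-u_2)\| \leq \dim(X)\cdot \gamma_m < \varepsilon$, completing the proof.

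The heart of the argument is really packaged inside the preceding lemmas; the only genuine subtlety is the correct bookkeeping when translating the $\theta_i$'s of Lemma~4.4 into a partition of the tree's leaves into two $MC$ fragments, which requires noting that $\lambda_i - \theta_i = \pm 1/2$ so the sign pattern produces exactly $u_1-u_2$ up to the factor $1/2$. The finite-dimensionality of $X$ enters only in that single inequality, which is why the argument cannot be pushed directly to general $X$.
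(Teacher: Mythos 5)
Your proof is correct and follows essentially the same route as the paper's: build the balanced disjoint tree via the fragment-splitting lemma, drive the level maxima to zero via the tree lemma, and then apply the finite-dimensional sign-selection lemma with $\lambda_i=\tfrac12$ to split the level-$m$ leaves into two $MC$ fragments $u_1,u_2$ with $\|T(u_1-u_2)\|\leq\dim(X)\,\gamma_m<\varepsilon$. The paper phrases the last step as considering $w=2\sum(\tfrac12-\lambda_i)v_i$ and observing $w=v_1-v_2$ for $MC$ fragments, which is exactly your bookkeeping with $A$ and $B$.
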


\begin{proof}
Fix any $v\in V$, $\varepsilon>0$ and $\text{dim}(X)=\gamma$. Using Lemma $4.6$ we construct a disjoint tree $(v_{n})$ on $v$ with $\|Tv_{2n}\|=\|Tv_{2n+1}\|$ for all $n\in\Bbb{N}$. By lemma $4.7$ we choose $m$ such that $\gamma\alpha_{m}<\varepsilon$ where $\alpha_{m}:=\max\limits_{2^{m}\leq i<2^{m+1}}\|Tv_{i}\|$. Then using Lemma $4.5$, we choose numbers $(\lambda_{i})_{i=1}^{n}$, $\lambda_{i}\in\{0,1\}$ for $i=2^{m},\dots, 2^{m+1}-1$ so that
$$
\Big\|\sum_{i=2^{m}}^{2^{m+1}-1} \Bigl(\frac{1}{2}-\lambda_{i} \Bigr)Tv_{i}\Big\|
\leq\frac{\gamma}{2}\max\limits_{2^{m}\leq i<2^{m+1}}\|Tv_{i}\|=
\frac{\gamma}{2}\alpha_{m}<\frac{\varepsilon}{2}.
$$
Now we consider the element $w=2 \Bigl(\sum\limits_{i=2^{m}}^{2^{m+1}-1}(\frac{1}{2}-\lambda_{i})v_{i} \Bigr)$. On the other hand, $w=\sum\limits_{i=2^{m}}^{2^{m+1}-1}u_{i}$ where  $u_{i}$ are disjoint, and $u_{i}\in\{(\pm v_{i})_{i=2^{m}}^{2^{m+1}-1}\}$. Then there exist two $MC$-fragments $v_{1}$ and $v_{2}$ of $v$ such that $w=v_{1}-v_{2}$  and $\|Tw=T(v_{1}-v_{2})\|<\varepsilon$.
\end{proof}

Now we are ready to prove the main result of this section.

\begin{thm}
Let $(V,E)$ be a Banach-Kantorovich space, $E$ an atomless order complete vector lattice,
$X$ a Banach space. Then every $GAM$-compact $\text{(bo)}$-norm continuous linear operator  $T:V\rightarrow X$ is narrow.
\end{thm}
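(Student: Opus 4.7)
The plan is to promote the finite-dimensional-range argument of Lemma~4.8 to an arbitrary Banach space $X$ by combining the tree machinery of Lemmas~4.6--4.7 with a finite-dimensional approximation coming from GAM-compactness. Fix $u\in V$ and $\varepsilon>0$; the goal is to produce MC fragments $u_1,u_2$ of $u$ with $\|T(u_1-u_2)\|<\varepsilon$.

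First I would repeat verbatim the first half of the proof of Lemma~4.8: using Lemma~4.6 recursively, construct a disjoint tree $(v_n)_{n\ge 1}$ on $u$ (so $v_1=u$ and $v_n=v_{2n}\coprod v_{2n+1}$) satisfying $\|Tv_{2n}\|=\|Tv_{2n+1}\|$ for every $n$. Lemma~4.7 then guarantees that $\alpha_m:=\max_{2^m\le i<2^{m+1}}\|Tv_i\|\to 0$ as $m\to\infty$. Next, GAM-compactness enters: at level $m$ the vectors $v_i$ (with $2^m\le i<2^{m+1}$) are pairwise disjoint with $\sum_i v_i=u$, so any signed combination $w=\sum_i\epsilon_i v_i$ with $\epsilon_i\in\{-1,+1\}$ satisfies $\ls w\rs=\ls u\rs$. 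Hence $w$ lies in the $\text{(bo)}$-bounded set $B:=\{v\in V:\ls v\rs\le\ls u\rs\}$, and by GAM-compactness $K:=\overline{T(B)}$ is a norm-compact subset of $X$; in particular $\{Tv_i\}_{i\ge 1}\subset K$.

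Third I would mimic the combinatorial step of Lemma~4.8 via a finite-dimensional approximation of $K$. For a parameter $\delta>0$ fix a finite $\delta$-net $\{y_1,\dots,y_d\}$ of $K$ and set $F:=\text{span}\{y_1,\dots,y_d\}$, so $\dim F\le d$. Decompose $Tv_i=y_i'+z_i$ with $y_i'\in F$ and $\|z_i\|\le\delta$, and apply Lemma~4.5 inside $F$ (with $\lambda_i=1/2$) to the collection $(y_i')_{i=2^m}^{2^{m+1}-1}$ to obtain $\theta_i\in\{0,1\}$ with
$$
\Big\|\sum_{i=2^m}^{2^{m+1}-1}\bigl(\tfrac{1}{2}-\theta_i\bigr)y_i'\Big\|\le\frac{d}{2}(\alpha_m+\delta).
$$
Setting $w:=2\sum_i(\tfrac{1}{2}-\theta_i)v_i$, exactly as in Lemma~4.8 the vector $w$ is a difference $u_1-u_2$ of two MC fragments of $u$, and the triangle inequality yields
$$
\|T(u_1-u_2)\|=\|Tw\|\le d(\alpha_m+\delta)+2^m\delta.
$$

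The main obstacle is the quantitative balancing in this last step: one must select $\delta$ (and hence $d=d(\delta)$, the $\delta$-covering number of $K$) and the level $m$ so that each of $d\alpha_m$, $d\delta$, and $2^m\delta$ is below $\varepsilon/3$. Since $d(\delta)$ generically grows as $\delta\to 0$ while $2^m$ grows with $m$, a naive matching fails; closing the estimate requires careful use of the structure of the compact set $K$ (for instance via Grothendieck's characterisation of $K$ as lying inside the closed convex hull of a null sequence, or by a more refined choice of the approximating subspace $F$) in order to decouple the covering number $d(\delta)$ from the tree level $m$ required to make $\alpha_m$ small. I expect this decoupling to be the most delicate part of the argument.
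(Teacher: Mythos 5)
There is a genuine gap, and it is exactly the one you flag at the end: the balancing of $d(\delta)\alpha_m$, $d(\delta)\delta$ and $2^m\delta$ cannot be closed by choosing a $\delta$-net of $K$ and approximating each vector $Tv_i$ individually, because the net errors $z_i$ accumulate additively over the $2^m$ terms of level $m$, while the covering number $d(\delta)$ blows up as $\delta\to 0$. Your proposal therefore stops precisely where the real work begins, and the ``decoupling'' you hope for does not come from a finer analysis of the tree.

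The idea you are missing is to approximate the \emph{operator}, not the individual vectors. The paper first embeds $X$ isometrically into $W=l_{\infty}(B_{X^{\star}})$ and uses the fact that for any relatively compact subset $H$ of $l_{\infty}(D)$ and any $\varepsilon>0$ there is a single finite-rank operator $S$ on $l_{\infty}(D)$ with $\|x-Sx\|\leq\varepsilon$ for all $x\in H$ (this is why the ambient space must be enlarged: an arbitrary $X$ need not have the approximation property). Taking $H=K=\{Tu:\ls u\rs\leq\ls v\rs\}$, which is relatively compact by $GAM$-compactness, one gets $S$ with $\|x-Sx\|\leq\varepsilon/2$ on $K$, and then applies Lemma~4.8 to the finite-rank, $\text{(bo)}$-norm continuous operator $R=S\circ T$ to produce $MC$ fragments $v_{1},v_{2}$ with $\|R(v_{1}-v_{2})\|<\varepsilon/2$. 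Since $\ls v_{1}-v_{2}\rs=\ls v\rs$, the vector $T(v_{1}-v_{2})$ itself lies in $K$, so $\|T(v_{1}-v_{2})-S(T(v_{1}-v_{2}))\|\leq\varepsilon/2$ in one stroke, with no dependence on the tree level $m$ or on a covering number. The whole combinatorial machinery (Lemmas 4.5--4.8) is run inside the finite-dimensional range of $R$, and the compactness is spent only once, at the very end, on the single vector $T(v_{1}-v_{2})$. Reorganizing your argument along these lines would repair it; as written, the estimate $d(\alpha_m+\delta)+2^m\delta$ cannot be driven below $\varepsilon$.
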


\begin{proof}
It is well known that if $H$ is a relatively compact subset of $l_{\infty}(D)$ for some infinite set $D$ and $\varepsilon>0$ is an arbitrary positive number then there exists a finite rank operator $S\in l_{\infty}(D)$ such that $\|x-Sx\|\leq\varepsilon$ for every $x\in H$. So, we may consider $X$ as a subspace of some $l_{\infty}(D)$ space
$$
X \hookrightarrow X^{\star\star} \hookrightarrow l_{\infty}(B_{X^{\star}})=l_{\infty}(D)=W.
$$
By the notation $\hookrightarrow$ we mean isometric embedding. Fix any $v\in V$ and $\varepsilon>0$. Since $T$ is a $GAM$-compact operator, $K=\{Tu:\ls u\rs\leq\ls v\rs\}$ is relatively compact in $X$ and hence, in $W$. Then there exist a finite dimensional operator
$S\in\mathcal{L}(W)$ such that $\|x- Sx\|\leq\frac{\varepsilon}{2}$ for every $x\in K$. Then $R=S\circ T$ is a $(\text{bo})$-norm continuous finite dimensional operator. By Lemma $4.8$, there exist two $MC$ fragments $v_{1},v_{2}$ of $v$ such that $\|R(v_{1}-v_{2})\|<\frac{\varepsilon}{2}$. Thus,
$$
\|T(v_{1}-v_{2})\|=\|T(v_{1}-v_{2})+S(T(v_{1}-v_{2}))-S(T(v_{1}-v_{2}))\|=
$$
$$
=\|T(v_{1}-v_{2})+R(v_{1}-v_{2})-S(T(v_{1}-v_{2}))\|\leq
$$
$$
\leq\|R(v_{1}-v_{2})\|+\|T(v_{1}-v_{2})-S(T(v_{1}-v_{2}))\|
<\frac{\varepsilon}{2}+\frac{\varepsilon}{2}=\varepsilon.
$$
\end{proof}

\section{Dominated narrow operators}
\label{sec5}

In this section we investigate some properties of the dominated  narrow  operators. Observe that for every $x,y\in L_{1}(\nu)$ the following equality  holds
$$
\|x-y\|=\||x|-|y|\|+\|x\|+\|y\|-\|x+y\|.\,\,\,(\star)
$$

\begin{thm}
Let $E,F$ be  order complete vector lattices such that $E$ is atomless,  $F$ an ideal of some order continuous Banach lattice  and $(V,E)$ a Banach-Kantorovich space. Then every $(bo)$-continuous dominated linear operator $T:V\rightarrow F$ is  order narrow if and only if $\ls T\rs$ is.
\end{thm}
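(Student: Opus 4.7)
The proof is an equivalence; I would handle the two implications separately.

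For the forward direction ($T$ order narrow $\Rightarrow \ls T\rs$ order narrow), fix $e\in E_+$. By Lemma~\ref{le:1} the representation
$\ls T\rs(e)=\sup\bigl\{\sum_{i=1}^{n}\ls Tv_{i}\rs:\sum_{i=1}^{n}\ls v_{i}\rs=e,\,e\in E_{0+}\bigr\}$
reduces the problem to elements $e=\sum_{i=1}^{n}\ls v_i\rs$ with $v_i\in V$, extending to general $e\in E_+$ via the second formula of Lemma~\ref{le:1}. For a fixed such decomposition, apply the order narrowness of $T$ to each $v_i$ separately to obtain MC fragments $v_i=v_i^1+v_i^2$ with $T(v_i^1-v_i^2)\overset{(bo)}\to 0$. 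The lattice norms $\ls v_i^j\rs$ split each $\ls v_i\rs$, so that $\sum_i\ls v_i^1\rs+\sum_i\ls v_i^2\rs=e$; refine this using the atomlessness of $E$ to absorb any overlap $\sum_i\ls v_i^1\rs\wedge\sum_i\ls v_i^2\rs$ into a disjoint splitting. This produces a net of signed decompositions $e_\alpha=e_\alpha^+-e_\alpha^-$ with $|e_\alpha|=e$, and one verifies $\ls T\rs(e_\alpha)\overset{(o)}\to 0$ by bounding $\ls T\rs(e_\alpha)$ via the decay of $\sum_i\ls T(v_i^1-v_i^2)\rs$ together with domination.

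For the backward direction ($\ls T\rs$ order narrow $\Rightarrow T$ order narrow), fix $u\in V$ and set $e:=\ls u\rs$. The order narrowness of $\ls T\rs$ furnishes a net $(e_\alpha)\subset E$ with $|e_\alpha|=e$ and $\ls T\rs(e_\alpha)\overset{(o)}\to 0$ in $F$. Write $e_\alpha=e_\alpha^+-e_\alpha^-$ with $e_\alpha^\pm\geq 0$, $e_\alpha^+\wedge e_\alpha^-=0$, $e_\alpha^++e_\alpha^-=e$, and use the decomposability of $V$ to split $u=u_\alpha^1+u_\alpha^2$ into MC fragments with $\ls u_\alpha^i\rs=e_\alpha^i$. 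Set $v_\alpha:=u_\alpha^1-u_\alpha^2$. The remaining step, $Tv_\alpha\overset{(bo)}\to 0$, is the heart of the proof: the naive domination bound $\ls Tv_\alpha\rs\leq\ls T\rs(e)$ does not decay. To force convergence, invoke the $L_1$-type identity $(\star)$ applied in the order continuous Banach lattice containing $F$ to $a:=Tu_\alpha^1$ and $b:=Tu_\alpha^2$, which expresses $\|Tv_\alpha\|$ in terms of $\bigl\|\,|Tu_\alpha^1|-|Tu_\alpha^2|\,\bigr\|+\|Tu_\alpha^1\|+\|Tu_\alpha^2\|-\|Tu\|$; combined with the bounds $\ls Tu_\alpha^i\rs\leq\ls T\rs(e_\alpha^i)$, the positivity-additivity $\ls T\rs(e_\alpha^+)+\ls T\rs(e_\alpha^-)=\ls T\rs(e)$, and the balanced convergence $\ls T\rs(e_\alpha^+)-\ls T\rs(e_\alpha^-)\overset{(o)}\to 0$, the identity bounds $\ls Tv_\alpha\rs$ by quantities that tend to zero, and the $(bo)$-continuity of $T$ seals the argument upon a suitable refinement of the net.

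The principal obstacle is the backward implication: bridging between the scalar convergence $\ls T\rs(e_\alpha)\overset{(o)}\to 0$ and the vector-valued $Tv_\alpha\overset{(bo)}\to 0$. This bridge cannot be crossed by domination alone; the identity $(\star)$, together with the ambient order continuous Banach lattice in which $F$ sits as an ideal, is the essential ingredient.
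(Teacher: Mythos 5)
Your overall skeleton (two implications, Lemma~\ref{le:1}, decomposability of $V$ to lift splittings of $\ls u\rs$ to splittings of $u$, the identity $(\star)$) matches the paper, but two essential ingredients are missing, and without them the argument does not close.

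First, the identity $(\star)$ and the fact that the norm of a sum of positive elements equals the sum of their norms are properties of $L_1$ (AL-spaces) only; they fail in a general order continuous Banach lattice (already in $\ell_2^2$ with $x=(1,0)$, $y=(0,1)$ one has $\|x-y\|=\sqrt2$ while the right-hand side of $(\star)$ equals $2$). You propose to ``invoke the $L_1$-type identity $(\star)$ applied in the order continuous Banach lattice containing $F$,'' which is not legitimate. The paper first proves the theorem for $F=L_1(\nu)$ and then reduces the general case to it: fixing $v$, it passes to the principal bands generated by $\ls v\rs$ and $\ls T\rs\ls v\rs$, observes that the latter is an order continuous Banach lattice with a weak unit, and applies the Lindenstrauss--Tzafriri representation theorem to realize it as an ideal of some $L_1(\nu)$, using Lemma~\ref{le:5} to pass between $F$ and the ambient lattice. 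This reduction is not cosmetic; it is what makes $(\star)$ and the norm-additivity available.

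Second, and more seriously, in both directions you apply the narrowness hypothesis to the wrong object. Domination only gives $\ls Tw\rs\leq\ls T\rs\ls w\rs$, an inequality in the unhelpful direction, so neither smallness of $T(v_i^1-v_i^2)$ controls $\ls T\rs\ls v_i^1\rs-\ls T\rs\ls v_i^2\rs$ for an \emph{arbitrary} decomposition, nor does smallness of $\ls T\rs(e_\alpha)$ control $Tv_\alpha$. The paper's key move is to first choose, using the order continuity of $L_1(\nu)$ and the supremum formula of Lemma~\ref{le:1}, a disjoint decomposition $e=\coprod_{i=1}^{n}\ls v_i\rs$ with $\|\ls T\rs(e)-\sum_{i=1}^{n}|Tv_i|\|<\varepsilon$; this near-attainment forces the discrepancies $\|\ls T\rs\ls u_i\rs-|Tu_i|\|$ on all finer fragments to be summably small, and only then is narrowness (of $T$, resp.\ of $\ls T\rs$) applied piecewise to the $v_i$, resp.\ to the $\ls v_i\rs$. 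Your backward direction, which applies narrowness of $\ls T\rs$ directly to $e=\ls u\rs$ and lifts the resulting splitting, genuinely fails: take $V=E=L_1[0,1]$, $Tf=\int fg$ with $g=\chi_{[0,1/2]}-\chi_{[1/2,1]}$, so $\ls T\rs(h)=\int h$; the splitting $e_\alpha^+=\chi_{[0,1/2]}$, $e_\alpha^-=\chi_{[1/2,1]}$ of $e=\mathbf 1$ satisfies $\ls T\rs(e_\alpha)=0$ yet $T(e_\alpha^+-e_\alpha^-)=1$. The ``balanced convergence'' and $(\star)$ cannot repair this without the preliminary fine decomposition.
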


\begin{proof}
First we mention that a Banach lattice $F$ is a lattice-normed space and the vector norm $\ls\cdot\rs$ coincides with the module map $|\cdot|$. Now we prove the theorem for $F=L_{1}(\nu)$. By Lemma $3.4$, instead of order narrowness we will consider narrowness. Assume first that $T$ is narrow.  Fix any $e\in E_{+}$ and $\varepsilon>0$. Let  $v\in V$ and $\ls v\rs=e$. Since
$$
\left\{\sum\limits_{i=1}^n|Tv_i|:
\sum\limits_{i=1}^n\ls v_i\rs= e;\,\ls v_{i}\rs\bot\ls v_{j}\rs;\,i\neq j;\,n\in\Bbb{N}\right\}
$$
is an increasing net, using the order continuity of $L_{1}(\nu)$, we can choose a finite collection $\{v_{1},\dots,v_{n}\}\subset V$ with
$$
e=\bigsqcup\limits_{i=1}^{n}\ls v_{i}\rs
$$
and
$$
\Big\|\ls T\rs(e)-\sum\limits_{i=1}^{n}|Tv_{i}|\,\Big\|<\varepsilon.
$$
Let $v_{i}=u_{i}\coprod w_{i}$ be a decomposition into a sum of $MC$ fragments $u_{i}$ and $w_{i}$. Then we have
$$
0\leq\ls T\rs(e)-\sum\limits_{i=1}^{n}(|Tu_{i}|+|Tw_{i}|)\leq
$$
$$
\leq\ls T\rs(e)-\sum\limits_{i=1}^{n}|Tv_{i}|
$$
Since
$$
e=\sum\limits_{i=1}^n\ls v_i\rs=\sum\limits_{i=1}^n(\ls u_i\rs+\ls w_{i}\rs),
$$
we have that
$$
\ls T\rs(e)=\ls T\rs(\sum\limits_{i=1}^n(\ls u_i\rs+\ls w_{i}\rs)
=\sum\limits_{i=1}^n(\ls T\rs\ls u_i\rs+\ls T\rs\ls w_{i}\rs).
$$
Since $\ls T\rs\ls u_{i}\rs-|Tu_{i}|$ and $\ls T\rs\ls w_{i}\rs-|Tw_{i}|$ are positive elements of $L_{1}(\nu)$ for every $i\in\{1,\dots,n\}$, the sum of their norms equals the norm of their sum.
Thus, we obtain
$$
\sum\limits_{i=1}^{n}(\|\ls T\rs\ls u_i\rs-|Tu_{i}|\,\|+\|\ls T\rs\ls w_i\rs-|Tw_{i}|\,\|)=
$$
$$
=\|\ls T\rs(e)-\sum\limits_{i=1}^{n}(|Tu_{i}|+|Tw_{i}|)\|\leq
$$
$$
\leq\|\ls T\rs(e)-\sum\limits_{i=1}^{n}|Tv_{i}|\,\|<\varepsilon.
$$
For each $i=1,\dots,n$ we represent $v_{i}=u_{i}\coprod w_{i}$ so that $u_{i},w_{i}\in V$ and $\|Tw_{i}-Tu_{i}\|<\frac{\varepsilon}{n}$.
Then putting $u=\coprod\limits_{i=1}^{n}u_{i}$, $w=\coprod\limits_{i=1}^{n}w_{i}$, $f_{1}=\ls u\rs$, $f_{2}=\ls w\rs$  and using inequality $(\star)$, we  obtain
$$
\|\ls T\rs f_{1}-\ls T\rs f_{2}\|\leq\sum\limits_{i=1}^{n}\|\ls T\rs\ls u_{i}\rs-\ls T\rs\ls w_{i}\rs\|\leq
$$
$$
\leq\sum\limits_{i=1}^{n}\|\,|Tu_{i}|-| Tw_{i}|\,\|+
\sum\limits_{i=1}^{n}(\|\ls T\rs\ls u_{i}\rs-|Tu_{i}|\,\|+
\|\ls T\rs\ls w_{i}\rs-|Tw_{i}|\,\|)\leq
$$
$$
\leq\sum\limits_{i=1}^{n}\| Tu_{i}- Tw_{i}\|+\varepsilon<2\varepsilon.
$$
Using the arbitrariness of $e \in E_{+}$ and $\varepsilon>0$, and the fact that $f_{1}, f_{2}$ are two $MC$ fragments of $e$, we deduce that $\ls T\rs$ is a narrow operator.

Now let $\ls T\rs$ be a narrow operator, $v$ an arbitrary element of $V$, $\varepsilon>0$, $\ls v\rs=e$, $e=\coprod\limits_{i=1}^{n}\ls v_{i}\rs$,\,$v_{i}\in V;\,\forall i\in\{1,\dots,n\}$ and again
$$
\Big\|\ls T\rs(e)-\sum\limits_{i=1}^{n}|Tv_{i}|\,\Big\|<\varepsilon.
$$
For each $i=1,\dots,n$ we decompose $v_{i}=f^{1}_{i}\coprod f^{2}_{i}$ such that
$$
\Big\|\ls T\rs\ls f_{1}^{i}\rs-\ls T\rs\ls f_{i}^{2}\rs\Big\|<\frac{\varepsilon}{n}
$$
and let $f^{1}=\coprod\limits_{i=1}^{n}f_{1}^{i}$ and $f^{2}=\coprod\limits_{i=1}^{n}f_{2}^{i}$. Taking into account that the $L_{1}$-norm of a sum of positive elements equals the sum of their norm, we obtain
$$
\sum\limits_{i=1}^{n}|Tf_{i}^{j}|\leq\sum\limits_{i=1}^{n}\ls T\rs\ls f_{i}^{j}\rs;\,j\in\{1,2\};
$$
$$
\sum\limits_{i=1}^{n}\ls T\rs\ls f_{i}^{1}\rs+\sum\limits_{i=1}^{n}\ls T\rs\ls f_{i}^{2}\rs=\ls T\rs(e);
$$
$$
\Big\|\sum\limits_{i=1}^{n}|Tf_{i}^{1}|+|Tf_{i}^{2}|\,\Big\|\leq\|\ls T\rs (e)\|.
$$
Then, using again inequality $(\star)$, we obtain
$$
\|Tf_{1}-Tf_{2}\|\leq
\sum\limits_{i=1}^{n}\|Tf_{i}^{1}-Tf_{i}^{2}\|
=\sum\limits_{i=1}^{n}\|\,|Tf_{i}^{1}|-|Tf_{i}^{2}|\,\|+
$$
$$
+\sum\limits_{i=1}^{n}\|\,|Tf_{i}^{1}|+|Tf_{i}^{2}|\,\|-\sum\limits_{i=1}^{n}\|\,|Tv_{i}|\,\|=
$$
$$
=\sum\limits_{i=1}^{n}\|\,|Tf_{i}^{1}|-|Tf_{i}^{2}|+\ls T\rs\ls f_{i}^{1}\rs-\ls T\rs\ls f_{i}^{1}\rs+
\ls T\rs \ls f_{i}^{2}\rs-\ls T\rs\ls f_{i}^{2}\rs\|+
$$
$$
+\sum\limits_{i=1}^{n}\|\,|Tf_{i}^{1}|+|Tf_{i}^{2}|\,\|-\sum\limits_{i=1}^{n}\|\,|Tv_{i}|\,\|\leq
$$
$$
\leq\sum\limits_{i=1}^{n}\|\ls T\rs\ls f_{i}^{1}\rs-\ls T\rs\ls f_{i}^{2}\rs\|+
\sum\limits_{i=1}^{n}\|\ls T\rs\ls f_{i}^{1}\rs-|Tf_{i}^{1}|\,\|+
$$
$$
+\sum\limits_{i=1}^{n}\|\ls T\rs\ls f_{i}^{2}\rs-|Tf_{i}^{2}|\,\|+
\|\ls T\rs(e)\|-\sum\limits_{i=1}^{n}\|\,|Tv_{i}|\,\|
$$
Finally, using the fact that
$$
\|\ls T\rs(e)\|-\sum\limits_{i=1}^{n}\|\,|T v_{i}|\,\|=
\Big\|\ls T\rs(e)-\sum\limits_{i=1}^{n}|T v_{i}|\,\Big\|<\varepsilon
$$
we obtain that $\|Tf_{1}-Tf_{2}\|< 3\varepsilon$. Since $f_{1}$ and $f_{2}$ are $MC$ fragments of $v$, this proves that $T$ is narrow.

Now we consider the general case. Since $F$ is an ideal of some order continuous Banach lattice $H$, we have by Lemma $3.5$ that $T:V\rightarrow F$ is order narrow if and only if $ T:V\rightarrow H$ is.

We consider $T:V\rightarrow H$ and $\ls T\rs:E\rightarrow H$. Fix any $v\in V$. By $E_{1}$ and $H_{1}$ we denote the principal bands in $E$ and $H$ generated by $\ls v\rs$ and $\ls T\rs\ls v\rs$ respectively. Using the fact that Boolean algebras of bands $\mathcal{B}(V)$ and $\mathcal{B}(E)$ are isomorphic [15,\,2.1.2.1], we denote $V_{1}:=\gamma(E_{1})$. Here $\gamma:\mathcal{B}(E)\rightarrow\mathcal{B}(V)$ is a boolean isomorphism. Denote by $T_{1}$ the restriction of $T$ to $V_{1}$. Operator $\ls T_{1}\rs$ coincides with the restriction $\ls T\rs$ to $E_{1}$. So $H_{1}$ is an order continuous Banach lattice with weak unit $\ls T\rs\ls v\rs$. Then by [17,Theorem\,1.b.14] there exists a probability space $(\Omega,\Sigma,\nu)$ and an ideal $H_{2}$ of $L_{1}(\nu)$ such that $H_{1}$ is isomorphic to $H_{2}$. Let $S:H_{1}\rightarrow H_{2}$ be a lattice isomorphism. Then we set $T_{2}=S\circ T_{1}$. Moreover, $\ls T_{2}\rs=S\circ\ls T_{1}\rs$. By  our previous consideration, $T_{2}:V_{1}\rightarrow H_{2}$ is order narrow if and only if $\ls T_{2}\rs:E_{1}\rightarrow H_{2}$ is. Thus, we have proved that $T_{2}$ is order narrow if and only if $\ls T_{2}\rs$ is.

Let $T:V\rightarrow H$ be order narrow. Fix an arbitrary $v\in V$. Since $T_{1}$ is order narrow, so is $T_{2}$ and hence $\ls T_{2}\rs$. So there  exists a net  $(v_{\alpha})_{\alpha\in\Lambda}\subset V_{1}$ where every element $v_{\alpha}$ is a difference  $u_{\alpha}^{1}-u_{\alpha}^{2}$ two $MC$  fragments of $v$ such that   $\ls T_{2}\rs(\ls v_{\alpha}\rs)\overset{(o)}\rightarrow 0$. Therefore, $\ls T_{1}\rs(\ls v_{\alpha}\rs)\overset{(o)}\rightarrow 0$ and hence $\ls T\rs(\ls v_{\alpha}\rs)\overset{(o)}\rightarrow 0$. So, $\ls T\rs:E\rightarrow H$ is order narrow.
\end{proof}

Let  $(V,E),(W,F)$ be lattice-normed spaces and let $M(V,W)$ be the space of dominated operators from $V$ to $W$. The band generated by all lattice homomorphisms from $E$ to $F$ we denote $\mathcal{H}(E,F)$. Then, $\mathcal{H}(V,W):=\{T\in M(V,W):\,\ls T\rs\in\mathcal{H}(E,F)\}$.

\begin{thm}
Let $E,F$ be order complete vector lattices such that $E$ is atomless,  $F$ an ideal of some order continuous Banach lattice  and let $(V,E)$ be a Banach-Kantorovich space. Then every $(bo)$-continuous dominated linear operator $T:V\rightarrow F$ is uniquely represented in the form  $ T=T_{h}+T_{n}$, where $T_{h}\in \mathcal{H}(V,F)$ and $T_{n}$ is a $(bo)$-continuous order narrow operator.
\end{thm}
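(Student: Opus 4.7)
The plan is to transfer the decomposition problem from $T$ to its exact dominant $\ls T\rs$, where the Maslyuchenko-Mykhaylyuk-Popov theorem applies, and then lift the resulting splitting back to the operator level via the fragment calculus for dominated operators. As a first step, I would apply the MMP theorem from $[18]$ to the positive operator $\ls T\rs:E\to F$: under the current hypotheses on $E$ and $F$, it splits uniquely as $\ls T\rs = R_h + R_n$, where $R_h$ lies in the band $\mathcal{H}(E,F)$ generated by the lattice homomorphisms, $R_n$ is order narrow, and $R_h \wedge R_n = 0$ in $L_r(E,F)$.

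The central step is to produce dominated operators $T_h, T_n : V \to F$ with $T = T_h + T_n$, $\ls T_h\rs = R_h$, and $\ls T_n\rs = R_n$. Since $R_h$ and $R_n$ generate complementary bands in $L_r(E,F)$, one invokes the band-projection machinery for dominated operators from Kusraev $[15]$: in the decomposable Banach-Kantorovich setting with $F$ order complete, a band projection acting on the exact dominant lifts to a fragment decomposition of the operator itself. Once this splitting is in place, $T_h \in \mathcal{H}(V,F)$ by definition, and since $\ls T_n\rs = R_n$ is order narrow, Theorem $5.1$ implies that $T_n$ itself is order narrow. The $(bo)$-continuity of each summand follows from the dominated estimate $\ls T_{\ast}(v - v')\rs \leq R_{\ast}\,\ls v - v'\rs$ for $\ast \in \{h,n\}$, together with the order continuity of the norm inherited from the ambient Banach lattice.

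For uniqueness, given any second decomposition $T = T_h' + T_n'$ of the prescribed form, Theorem $5.1$ forces $\ls T_n'\rs$ to be order narrow, while $\ls T_h'\rs \in \mathcal{H}(E,F)$ holds by definition. From $T_h - T_h' = T_n' - T_n$ and the triangle inequality for exact dominants, $\ls T_h - T_h'\rs$ is simultaneously majorized by a member of $\mathcal{H}(E,F)$ and by an order narrow operator; since those are disjoint bands in $L_r(E,F)$, this forces $\ls T_h - T_h'\rs = 0$, and hence $T_h = T_h'$, $T_n = T_n'$. The principal obstacle is the lifting step: the passage from a band decomposition of $\ls T\rs$ to a fragment decomposition of $T$ is not formal and relies essentially on the decomposability of $V$ together with the order completeness of $F$ through the dominated-operator structure theory of $[15]$.
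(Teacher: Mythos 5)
Your proposal follows essentially the same route as the paper: decompose the exact dominant $\ls T\rs$ via the Maslyuchenko--Mykhaylyuk--Popov theorem from $[18]$, lift the splitting to $T$ using the decomposability of the vector norm $p(T)=\ls T\rs$ on $M(V,F)$ established in Kusraev $[15,\,4.2.6]$, and transfer narrowness back to $T_{n}$ via Theorem~5.1. You in fact supply slightly more detail than the paper does on the uniqueness and the $(bo)$-continuity of the summands, but the underlying argument is the same.
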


\begin{proof}
By $[15,4.2.1]$, the set $M(V,E)$ with the mapping $p: M(V,E)\rightarrow L_{+}(E,F)$ is a lattice-normed space. Here $p(T)=\ls T\rs$ for every
$T\in M(V,E)$. By  $[15,\,4.2.6]$, the vector norm $p:M(V,F)\rightarrow L_{+}(E,F)$ is decomposable. This means that for every dominated operator $T:V\rightarrow F$ and its exact dominant $\ls T\rs:E\rightarrow F$ the following statement hold:
$$
\ls T\rs=S_{1}+S_{2}\Rightarrow\exists T_{1},T_{2}\in M(V,F);
$$
$$
\ls T_{1}\rs=S_{1};\,\ls T_{2}\rs=S_{2};\,0\leq S_{1},S_{2};\,S_{1}\bot S_{2}.
$$
Fix an arbitrary $(bo)$-continuous dominated operator $T:V\rightarrow F$. By theorem
$[15,4.3.2]$, every dominated operator $T:V\rightarrow F$ is $(bo)$-continuous if and only if $\ls T\rs:E\rightarrow F$ is $(o)$-continuous. Hence, $\ls T\rs$ is $(o)$-continuous. Then by  $[18,\,11.7]$ the positive $(o)$-continuous operator $\ls T\rs$ is uniquely represented as a sum $\ls T\rs=S_{D}+S_{N}$ when $S_{D}$ is a $(o)$-continuous operator, $S_{D}\in\mathcal{H}(E,F)$ and $S_{N}$ is a $(o)$-continuous narrow operator. Then we obtain
$$
\ls T\rs=S_{D}+S_{N}\Rightarrow\exists T_{1},\,T_{2}\in M(V,F);\,
$$
$$
\ls T_{1}\rs=S_{N};\,\ls T_{2}\rs=S_{D}.
$$
Finally, by Theorem $5.1$, the proof is completed.
\end{proof}

\begin{thm}
Let $E(\mu)$ be a K{\"{o}}the function space over probability space
$(\Omega,\Sigma,\mu)$, with atomless measure $\mu$, such that $E(\mu)'$ is order continuous, $F$ is an order continuous Banach lattice  and $X$ is a Banach space. Then for every $(bo)$-continuous dominated narrow linear operator $T:E(X)\rightarrow F$ the inclusion $[0,\ls T\rs]\subset \mathcal{N}(E(X),F)$ holds.
\end{thm}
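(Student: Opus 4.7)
The plan is to reduce the statement to the vector-lattice case via Theorem 5.1 and then invoke the order-ideal property of positive narrow operators established by Maslyuchenko--Mykhaylyuk--Popov in [18]. Consistent with the paper's convention for $\mathcal{H}(V,W)$, where classes of dominated operators are characterised by properties of their exact dominants, I read the inclusion $[0,\ls T\rs]\subset\mathcal{N}(E(X),F)$ as saying that every dominated operator $R:E(X)\to F$ with $0\leq \ls R\rs\leq \ls T\rs$ is narrow.

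The first step would be to deduce that $\ls T\rs:E(\mu)\to F$ is narrow as a positive operator between vector lattices. Since $T$ is $(bo)$-continuous, [15, 4.3.2] gives that $\ls T\rs$ is $(o)$-continuous. Since $T$ is narrow and $F$ has order continuous norm, $T$ is order narrow by Lemma 3.4, and Theorem 5.1 then transfers the narrowness to $\ls T\rs$ (viewed as a dominated operator from $(E(\mu),E(\mu))$, with modulus as vector norm, into $F$).

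The main step, and the principal obstacle, is to invoke the theorem of [18] asserting that, for a K{\"o}the function space $E(\mu)$ over an atomless probability measure with $E(\mu)'$ order continuous and for $F$ an order continuous Banach lattice, the set of positive narrow operators in $L_+(E(\mu),F)$ forms an order ideal --- in fact a band, as already used in the proof of Theorem 5.2 via [18, 11.7]. The hypothesis that $E(\mu)'$ be order continuous is precisely what the MMP argument requires here. Granted this, every $S$ with $0\leq S\leq\ls T\rs$ in $L_+(E(\mu),F)$ is narrow.

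Finally, I would lift the conclusion back to $E(X)$. For $R\in M(E(X),F)$ with $\ls R\rs\leq\ls T\rs$, the exact dominant $\ls R\rs$ is narrow by the previous step and $(o)$-continuous (being dominated by the $(o)$-continuous operator $\ls T\rs$). By [15, 4.3.2], $R$ itself is $(bo)$-continuous, so Theorem 5.1 yields that $R$ is order narrow, and Lemma 3.4 converts this to narrowness, giving $R\in\mathcal{N}(E(X),F)$ as required.
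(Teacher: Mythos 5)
Your proposal is correct and follows essentially the same route as the paper: transfer narrowness from $T$ to $\ls T\rs$ via Theorem 5.1, invoke the domination theorem for positive narrow operators on K\"othe function spaces with $E(\mu)'$ order continuous (which the paper attributes to [7, Theorem 3.15] (Flores--Ruiz) rather than to [18] as you do), and transfer back via Theorem 5.1. Your write-up usefully makes explicit the $(o)$-continuity of $\ls R\rs$ and the resulting $(bo)$-continuity of $R$ needed to apply Theorem 5.1 in the reverse direction --- steps the paper's two-sentence proof leaves implicit.
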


\begin{proof}
By theorem $[7,\,3.15]$, for every positive narrow operator $S:E(\mu)\rightarrow F$, such that $E(\mu)'$ is order continuous, the inclusion $[0,S]\subset \mathcal{N}(E(X),F)$ holds. Now by Theorem $5.1$ the proof is completed.
\end{proof}


\begin{thebibliography}{8}

\bibitem{Ab}~Abramovich\,Y.~A., Aliprantis\,C.~D. An Invitation to
Operator Theory.---AMS, 2002.


\bibitem{Al}~Aliprantis~C.\,D., Burkinshaw~O. Positive operators. ---Springer, 2006.


\bibitem{B}~Boyko\,K., Kadets\,V. Narrow Operators On
Bochner $L_{1}$-Spaces /\!/  Journal of Math. Physics, Analysis, Geometry (46), 2002. (2), P. 421-441.



\bibitem{Bi-1}~Bilik\,D., Kadets\,V., Shvidkov\,R., Sirotkin\,G., Werner\,D. Narrow Operators On
Vector-Valued Sup-Normed Spaces /\!/ Ilinois Journal of Math. (2), 2006. (4), P. 358-371.




\bibitem{Bi-2}~Bilik\,D., Kadets\,V., Shvidkov\,R., Werner\,D. Narrow operators and the
Daugavet property for ultraproducts /\!/ Positivity. (9), 2005.  P. 45-62.



\bibitem{E}~Enflo\,P., Starbird\,T.\,W. Subspace of $L_{1}$ containing $L_{1}$ /\!/ Studia Math. (65), 1979.  P.213-225.




\bibitem{F}~Flores\,J., Ruiz\,C. Domination by Positive Narrow Operators /\!/ Positivity. (17), 2003.  P.303-321.


\bibitem{G}~Ghoussoub\,N., Rosental\,H.\,P.: Martingales $G_{\delta}$ embeddings and quotients of $L_{1}$  /\!/ Math. Ann., 264(9), 1983.  P. 321-332.


\bibitem{J}~Johnson\,W.\,B., Maurey\,B., Schechtman\, G., Tzafriri\, L. Symmetric structures in Banach spaces
/\!/ Memories Amer. Math. Soc. (19), 1979.



\bibitem{Ka}~Kantorovich\,L.\,V.: Some classes functional equations  /\!/ Doklads Russian Academy of Science Math. Ann., 4(5), 1936.  P. 211-216.



\bibitem{Kad-1}~Kadets\,V.\,M., Kadets\,M.\,I. Rearrangements of series in Banach spaces.--- Trans. Math. Monogr. Am. Math. Soc., 86(1991).


\bibitem{Kad-2}~Kadets\,V., Popov\,M. The Daugavet property for narrow operators in rich subspaces of the spaces
$C[0,1]$ and $L_{1}[0,1]$  /\!/ Algebra i Analysis Studia Math. (8), 1996.  P. 43-62(in Russian),
English translation in St. Petersburg Math. J. (8), 1997. P. 571-584.



\bibitem{Kad-3}~Kadets\,V., Shvidkov\,R., Werner\,D. Narrow operators and rich subspaces of Banach spaces with the
Daugavet property  /\!/ Studia Math. (147), 2001.  P. 269-298.



\bibitem{Ke}~Kevin\,A. Representation of compact and weakly compact operators on the space of Bochner integrable functions /\!/ Pacif. J. Math. 92(2), 1981. P. 257-267.



\bibitem{Ku}~Kusraev\,A.\,G. Dominated operators.---Kluwer Academic Publishers,
 2000.


\bibitem{Li-1}~Lindenstrauss\,J.,Tzafriri\,L. Classical Banach spaces. Vol.1, Sequence spaces. ---Springer, 1977.


\bibitem{Li-2}~Lindenstrauss\,J.,Tzafriri\,L. Classical Banach spaces. Vol.2, Function  spaces. ---Springer, 1979.



\bibitem{Ma}~Maslyuchenko\,O., Mykhaylyuk\,V., Popov\,M. A lattice approach to narrow operators /\!/ Positivity. (13), 2009.  P. 459-495.


\bibitem{Meg}~Megginson\,R.~E. An Introduction to Banach Space Theory.---Springer-Verlag, 1998.



\bibitem{P}~Popov\,M.\,M., Plichko\,A.\,M. Symmetric function spaces on atomless probability spaces, Dissertationes Math.. (Rozprawy Mat.), 306(1990),  P.1-85.


\end{thebibliography}
\end{document}